\newtheorem{theo}{Theorem}[section]
\newtheorem{cor}[theo]{Corollary}
\newtheorem{lm}[theo]{Lemma}
\newtheorem{rem}[theo]{Remark}
\newtheorem*{remark}{Remark}
\newcommand{\tr}{\triangle}
\numberwithin{equation}{section}
\begin{document}	
\title{On extensions of hook Weyl modules}	

\author{Mihalis Maliakas\corref{cor1}}
\ead{mmaliak@math.uoa.gr}
\address{Department of Mathematics, University of Athens}
\author{Dimitra-Dionysia Stergiopoulou\fnref{fn2}}
\ead{dstergiop@math.uoa.gr}
\address{Department of Mathematics, University of Athens}

\cortext[cor1]{Corresponding author}
\fntext[fn2]{Partially supported by Onassis Foundation grant GZM 065-1.}





\begin{abstract}
We determine the integral extension groups $\mathrm{Ext}^1(\Delta(\mathrm{h}),\Delta(\mathrm{h}(k)))$ and $\mathrm{Ext}^k(\Delta(\mathrm{h}),$ $\Delta(\mathrm{h}(k)))$,  where $\Delta(\mathrm{h}),\Delta(\mathrm{h}(k))$ are the Weyl modules of the general linear group $GL_n$  corresponding to hook partitions $\mathrm{h}=(a,1^b)$, $\mathrm{h}(k)=(a+k,1^{b-k})$.
\end{abstract}	

\begin{keyword}
	Extensions \sep general linear group \sep Weyl module \sep hook
	\MSC[2010] 20G05
\end{keyword}
\maketitle

\section{Introduction}This paper concerns polynomial representations of the general linear group $GL_n$ over the integers. For a partition $\lambda$, let $\Delta(\lambda)$ denote the Weyl module of $GL_n$ of highest weight $\lambda$. The extension groups $\mathrm{Ext}^i(\Delta(\lambda),\Delta(\mu))$ play an important role in the theory. For example, the $p$-torsion of $\mathrm{Ext}^1(\Delta(\lambda),\Delta(\mu))$ yields the $\mathrm{Hom}$ space between the corresponding modular Weyl modules of $GL_n(K)$, where $K$ is an algebraically closed field of characteristic $p>0$, and the dimensions of the higher modular extensions may be obtained through torsion and restriction of integral extensions. Jantzen's sum formula can be viewed and proved via integral extension groups \cite{AK}. 

There are not many cases where explicit computations of integral extension groups between Weyl modules have been carried out. In \cite{AB} the $GL_2$ case was treated and in \cite{BF} the $GL_3$ case when $\lambda$ and $\mu$ differ by a multiple of a single root, both for $i=1$. In \cite{Ak} the case $\lambda=(1^a), \mu=(a)$ was studied and in \cite{Ma} the situation where $\lambda, \mu$ are hooks differing by a single root was considered. In \cite{Ku2} the case where $\lambda, \mu$ are any partitions differing by a single root was settled. As the modular extension groups are intimately related to the integral ones, we mention the result on neighboring Weyl modules \cite{Ja} Part II Section 7,  and the $SL_2$ result in \cite{Pa} for all $i$ generalizing \cite{Er} and \cite{CE}. More can be found in \cite{CP}.

Let $\mathrm{h}, \mathrm{h}(k)$ be hooks, $\mathrm{h}=(a,1^b), \mathrm{h}(k)=(a+k,1^{b-k})$, where $k$ is an integer such that $1 \le k \le b$. It follows  that $\mathrm{Ext}^i(\Delta(\mathrm{h})), \Delta(\mathrm{h}(k)))=0$ if $ i>k $. In this paper we determine $\mathrm{Ext}^1(\Delta(\mathrm{h}),\Delta(\mathrm{h}(k)))$ and $\mathrm{Ext}^k(\Delta(\mathrm{h})), \Delta(\mathrm{h}(k)))$ (Theorem 3.5 and Theorem 4.1). Our approach utilizes presentation matrices for various $\mathrm{Ext}$ groups that we determine from the description of generators and relations of Weyl modules of \cite{ABW} ($ i=1 $) and from the projective resolutions of \cite{Ma} ($ i=k $). Using these and the degree reduction theorem of Kulkarni \cite{Ku1}, we identify cyclic generators of extension groups of the form $\mathrm{Ext}^i(\Delta(\mathrm{h}),M)$, $ (i=1, k) $, where $ M $ is a tensor product of a divided power and an exterior power. Computing the image of these generators under canonical maps yields the results.

\section{Recollections}

\subsection{Notation} Let $F$ be a free abelian group of finite rank $n$.  Fixing a basis of $F$ yields an identification of general linear groups $GL(F)=GL_n(\mathbb{Z})$. We will be working with homogeneous polynomial representations of $GL_n(\mathbb{Z})$ of degree $r$, or equivalently, with modules over the Schur algebra $S_\mathbb{Z}(n,r)$  \cite{Gr}, Section 2.4. We will write $S(n,r)$ in place of $S_\mathbb{Z}(n,r)$.
By $DF=\sum_{i\geq 0}D_iF$ and $\wedge F=\sum_{i\geq 0}\wedge^{i}F$
we denote the divided power algebra of $F$ and the exterior algebra of $F$ respectively. We will usually omit $F$ and write $D_i$ and $\wedge^i$. 

From \cite{Gr} or \cite{AB}, Proposition 2.1, we recall that for each sequence $a_1,...,a_n$ of non negative integers $a_i$ that sum to $r$, the $S(n,r)$-module $D_{a_1} \otimes ... \otimes D_{a_n}$ is projective. Throughout this paper all tensor products are over the integers.

For a partition $\lambda$ of $r$ with at most $n$ parts, we denote by $\Delta(\lambda)$ the corresponding Weyl module for $S(n,r)$. If $\lambda =(a)$ is a partition with one part, then $\Delta(\lambda) =D_a$, and if $\lambda =(1^b)$, then $\Delta(\lambda) =\wedge^{b}$. A hook $\mathrm{h}$ is a partition of the form $\mathrm{h}=(a,1^b)$. The following complex of $S(n,r)$-modules (which is the dual of the usual Koszul complex) is exact 
$$0 \rightarrow D_{a+b} \rightarrow ... \rightarrow D_{a+1} \otimes \wedge^{b-1} \xrightarrow{\theta_{a}}  D_{a} \otimes \wedge^{b} \rightarrow ... \rightarrow \wedge^{a+b} \rightarrow 0,$$
where $\theta_{a}$ is the composition 
$\ D_{a+1}\otimes \wedge^{b-1} \xrightarrow{\triangle \otimes 1} D_{a}\otimes D_1 \otimes \wedge^{b-1}\xrightarrow{1 \otimes m} D_{a} \otimes \wedge^{b},$
where $\triangle$ (respectively, $m$) is the indicated component of the comultiplication (resp., multiplication) map of the Hopf algebra $DF$ (resp., $\wedge F $). It is  well known that if $\mathrm{h}=(a,1^b)$ is a hook, $b \geq 1$, then
$\Delta (\mathrm{h}) \simeq cok(\theta_{a}) \simeq ker(\theta_{a-1}),$ 
so that we have the following short exact sequence 
\begin{equation}0 \rightarrow \Delta (\mathrm{h}(1)) \xrightarrow{i} D_{a} \otimes \wedge^{b} \xrightarrow{\pi_0} \Delta(\mathrm{h}) \rightarrow 0, \end{equation}
where $\mathrm{h}(1)=(a+1,1^{b-1})$, the map $i$ is induced by $\theta_{a}$ on generators and the map $\pi_0$ is induced by the identity map on generators

\textit{Notation}: Throughout this paper we use the notation $\mathrm{h}=(a,1^b), \mathrm{h}(k)=(a+k,1^{b-k}),  1\le k\le b, r=a+b.$

\subsection{Straightening law} We recall the straightening law and the semi-standard basis theorem for $\Delta(\mathrm{h})$ (\cite{ABW}, Theorem II.3.16). Fix an ordered basis $e_1,..,e_n$ of $F$. For simplicity, we denote the element $e_i$ by $i$ and accordingly the element $e_{i_1}^{(a_1)}  ...  e_{i_t}^{(a_t)} \otimes e_{j_1} \wedge ... \wedge e_{j_b} \in D_a \otimes \wedge^b$ by ${i_1}^{(a_1)}  ...  {i_t}^{(a_t)} \otimes {j_1} ... {j_b}$. The image of this element under the identification $\Delta(\mathrm{h}) \simeq cok(\theta_{a})$ will be denoted by ${i_1}^{(a_1)}  ...  {i_t}^{(a_t)} | {j_1} ... {j_b}$. Now suppose $i_1<i_2<...<i_t$ and $j_1 \le i_1$. Then in $\Delta(\mathrm{h})$ we have 
$${i_1}^{(a_1)}...{i_t}^{(a_t)} | {j_1} ... {j_b} = \begin{cases} -\sum\limits_{s\geq 2}{i_1}^{(a_1+1)}  ...{i_s}^{(a_s-1)}...{i_t}^{(a_t)} | {i_s}{j_2} ... {j_b}, & \mbox{if}\;j_1=i_1 \\ -\sum\limits_{s\geq 1}j_1{i_1}^{(a_1)}  ...{i_s}^{(a_s-1)}...{i_t}^{(a_t)} | {i_s}{j_2} ... {j_b}, & \mbox {if}\;j_1<i_1. \end{cases} $$

A $\mathbb{Z}$ - basis of $\Delta(\mathrm{h})$ is the set of all ${i_1}^{(a_1)}  ...  {i_t}^{(a_t)} | {j_1} ... {j_b} $, where $a_1+...+a_t=a,\; i_1<...<i_t$ and $i_1 <j_1< ... <j_b.$

\subsection{Resolutions of hooks} We will use the the explicit finite projective resolution $P_{*}(a,b)$ of $ \Delta(\mathrm{h})$,
\[0 \rightarrow ... \rightarrow P_2(a,b) \xrightarrow{\theta_{2}(a,b)} P_1(a,b) \xrightarrow{\theta_{1}(a,b)} P_0(a,b)
\]
of \cite{Ma}, Theorem 1, which we now recall. For short we denote the tensor product $D_{a_1} \otimes ... \otimes D_{a_m}$ of divided powers by $D(a_1,...,a_m)$. Let
$P_i(a,b)=\sum D(a_1,...,a_{b+1-i})$  
where the sum ranges over all sequences $(a_1,...,a_{b+1-i})$ of positive integers of length $b+1-i$ such that $a_1+...+a_{b+1-i}=a+b$ and $a \le a_1 \le a+i$. The differential $\theta_{i}(a,b)$ is defined be sending $x_1 \otimes ... \otimes x_{b+1-i} \in D(a_1,...,a_{b+1-i})$ to
\[ \sum_{j=1}^{s} (-1)^{j+1}x_1 \otimes ...\otimes \triangle(x_j) \otimes ...  \otimes x_{b+1-i} \in D(a_1,...,u,v,...,a_{s}),
\]
where $ s=b+1-i $ and  $\triangle(x_j)$ is the image of $x_j$ under the two-fold diagonalization $D(a_j) \rightarrow \sum D(u,v)$, where the sum ranges of all positive integers $u,v$ such that $u+v=a_j$ and $D(a_1,...,u,v,...,a_{b+1-i})$ is a summand of $P_{i-1}(a,b)$ with $u$ located at position $j$. We denote by $ \triangle_{u,v} : D(a_j) \rightarrow D(u,v)$ the indicated component of the two-fold diagonalization $D(a_j) \rightarrow \sum D(u,v)$.

If $A,B$ are $S(n,r)$ - modules, we write $\mathrm{Hom}(A,B)$ and $\mathrm{Ext}^i(A,B)$ in place of $\mathrm{Hom}_{S(n,r)}(A,B)$ and $\mathrm{Ext}^i_{S(n,r)}(A,B)$ respectively.

We recall the recursions 
\begin{align*}
&P_0(a,b)=D(a) \otimes P_0(1,b-1), \\ &P_i(a,b)=P_{i-1}(a+1,b-1) \oplus D(a) \otimes P_i(1,b-1), i>0 \end{align*}
and that under these identifications we have the following. 

\begin{rem} If $M$ is a $S(n,r)$-module, the differential $\mathrm{Hom}(\theta_i(a,b), M)$ of the complex $\mathrm{Hom}(P_*(a,b),M)$ looks like
	\begin{center}
		\begin{tikzcd}
	\mathrm{Hom}(P_{i-2}(a+1,b-1),M) \arrow{r}  \arrow[d, phantom, "\oplus"]
		  & \mathrm{Hom}(P_{i-1}(a+1,b-1),M)\arrow[d, phantom, "\oplus"]\\
		\mathrm{Hom}(D(a) \otimes P_{i-1}(1,b-1),M)\arrow{r} \urar[shorten >= 25pt,shorten <= 25pt]{}
			  &\mathrm{Hom}(D(a) \otimes P_i(1,b-1),M)
		\end{tikzcd}
	\end{center} 
	where the top horizontal map is $\mathrm{Hom}(\theta_{i-1}(a+1,b-1),M)$, the bottom one is $-\mathrm{Hom}(1 \otimes  \theta _i(1,b-1),M)$ and the restriction of the diagonal one on the summand $\mathrm{Hom}(D(a,j,a_2,...,a_m),M)$  is $\mathrm{Hom}(\triangle_{a,j} \otimes 1 \otimes ... \otimes 1), M) .$
\end{rem}
For any $S(n,r)$-module $M$ and any sequence $a_1,...,a_m$ of non negative integers such that $a_1+...+a_m=r$ and $m \le n$,  we identify the $\mathbb{Z}$-module $$\mathrm{Hom}(D(a_1,...,a_m),M)$$ with the $(a_1,...,a_m)$ weight subspace of $M$ (with respect to the action of $\mathbb{Z}^n$) according to \cite{AB}, eqn. (11) on p. 178. We will use such identifications freely throughout this paper.

In particular, suppose  $M$ is a skew Weyl module for $S(n,r)$ (denoted be $K_{\lambda / \mu}(F)$ in \cite{ABW}). Using the $\mathbb{Z}$-basis of $M$  given by the semi-standard tableaux \cite{ABW}, Theorem II.3.16, we see that the $\mathbb{Z}$-module $\mathrm{Hom}(D(a_1,...,a_m),M)$ may be identified with the $\mathbb{Z}$-submodule of $M$ that has basis the semi-standard tableaux of $M$ that contain the entry $i$ exactly $a_i$ times, $i=1,...,m$. We call this the semi-standard basis of $\mathrm{Hom}(D(a_1,...,a_m),M)$. (Perhaps we should remark that what we have called semi-standard tableaux are called 'co-standard' in \cite{ABW}, Definition II.3.2: the entries in each row are weakly increasing from left to right and the entries in each column are strictly increasing from top to bottom.)

We record here a handy computational remark. For $M$ a skew Weyl module and $T \in \mathrm{Hom}(D(a_1,...,a_m),M)$ a semi-standard basis element, let $\phi_t (T)$, $1 \le t < m$, be the element of $\mathrm{Hom}(D(a_1,...,a_t+a_{t+1},...,a_m),M)$ obtained from $T$ by replacing each occurrence of $j>t$ by $j-1$. If $t\ge m$, let $\phi_t (T)=0$. By extending linearly, we obtain for each degree $i$ a map of $\mathbb{Z}$-modules $$\phi_t: \mathrm{Hom}(P_i(a,b),M) \rightarrow \mathrm{Hom}(P_{i+1}(a,b),M).$$ It is clear that only a finite number of these maps are nonzero. From the definition of the differential of $P_*(a,b)$, we obtain the following description for the differential of $\mathrm{Hom}(P_*(a,b),M)$.
\begin{rem}
	With the previous notation, $\mathrm{Hom}(\theta_i(a,b),M) = \sum_{i \ge 1}(-1)^{t-1}\phi_t.$
\end{rem}

Let $n \ge b+1$. Then $P_*(a,b)$ is a projective resolution of $\Delta(\mathrm{h})$. We claim that the $\mathbb{Z}$-module $\mathrm{Ext}^{i}(\Delta(\mathrm{h}),M)$, where $M$ is any skew Weyl $S(n,r)$-module, is isomorphic to the torsion submodule of the cokernel $E^i(\Delta(\mathrm{h}),M)$ of the map $\mathrm{Hom}(\theta_i(a,b),M)$. Indeed, by the argument of \cite{C}, bottom of p. 634 to the top of p. 635, we have \[E^i(\Delta(\mathrm{h}), M) \simeq \mathrm{Ext}^i(\Delta(\mathrm{h}), M) \oplus N, \]where $N$ is the image of the map $\mathrm{Hom}(\theta_i(a,b),M)$. (The argument given in loc. cit. is stated for $i=1$ but is valid for any $i \ge 1$.) As a submodule of a free $\mathbb{Z}$-module, $N$ is a free $\mathbb{Z}$-module. On the other hand, the $\mathbb{Z}$-module $ \mathrm{Ext}^i(\Delta(\mathrm{h}), M)$ is torsion for all $i \ge1$ by \cite{AB}, last paragraph of Section 8. Hence from the above isomorphism we obtain that the torsion submodule of $E^i(\Delta(\mathrm{h}),M)$ is isomorphic to $\mathrm{Ext}^i(\Delta(\mathrm{h}), M)$.

\subsection{The extensions $\mathrm{Ext}^i(\Delta(\mathrm{h}), D_{a+k}\otimes \wedge^{b-k})$}
We will use the following lemma several times. Let $$r_k = gcd\left(\tbinom{k+1}{1}, \dots ,\tbinom{k+1}{k}\right)$$ and note that $r_k=1$ unless $k+1=p^e$, $ p $ prime, in which case $r_k=p$.
\begin{lm}
	Suppose $n \ge {b+1}$ and $1 \le k < b$. Then $$\mathrm{Ext}^i(\Delta(\mathrm{h}),D_{a+k} \otimes \wedge ^{b-k})=\mathrm{Ext}^i(\wedge ^{k+1}, D_{k+1}).$$ In particular, $\mathrm{Ext}^1(\Delta (\mathrm{h}),D_{a+k} \otimes \wedge ^{b-k})= \mathbb{Z}_2$ and
	$\mathrm{Ext}^k(\Delta(\mathrm{h}),D_{a+k} \otimes \wedge^{b-k})= \mathbb{Z}_{r_k}.$
	\end{lm}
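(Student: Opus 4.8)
The plan is to realize $\mathrm{Ext}^i(\Delta(\mathrm h),M)$, where $M=D_{a+k}\otimes\wedge^{b-k}$, as the torsion submodule of an explicit integral cokernel, and to reduce the computation to the boundary case $(a,b)=(1,k)$, where the right-hand side of the asserted isomorphism lives. Since $M$ is a tensor product of two Weyl modules it is a skew Weyl module, so the semi-standard basis and the identification of $\mathrm{Hom}(D(a_1,\dots,a_m),M)$ with a weight subspace of $M$ both apply. As $n\ge b+1$, the complex $P_*(a,b)$ resolves $\Delta(\mathrm h)$, and by the discussion preceding the lemma $\mathrm{Ext}^i(\Delta(\mathrm h),M)$ is the torsion submodule of the cokernel of $\mathrm{Hom}(\theta_i(a,b),M)=\sum_t(-1)^{t-1}\phi_t$. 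First I would dispose of the base case $(a,b)=(1,k)$, where $\Delta(\mathrm h)=\Delta((1,1^k))=\wedge^{k+1}$ and $M=D_{k+1}$, and then show that the answer does not depend on $a$ or $b$.

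\textbf{Base case.} Here $\mathrm{Hom}(P_i(1,k),D_{k+1})$ is free on the sequences $(a_1,\dots,a_{k+1-i})$ with $a_1+\dots+a_{k+1-i}=k+1$, $1\le a_1\le 1+i$ and all $a_j\ge1$, because every relevant weight space of $D_{k+1}$ is spanned by a single divided-power monomial; merging positions $t,t+1$ via $\phi_t$ multiplies that monomial by $\binom{a_t+a_{t+1}}{a_t}$. For $i=1$ the differential $\mathbb Z=\mathrm{Hom}(P_0,D_{k+1})\to\mathrm{Hom}(P_1,D_{k+1})=\mathbb Z^{k}$ sends the generator to a vector all of whose entries are $\pm\binom{2}{1}=\pm2$, so its cokernel is $\mathbb Z^{k-1}\oplus\mathbb Z_2$ with torsion $\mathbb Z_2$. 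For $i=k$ the map $\mathrm{Hom}(P_{k-1},D_{k+1})=\mathbb Z^{k}\to\mathrm{Hom}(P_k,D_{k+1})=\mathbb Z$ sends the basis vector indexed by $(a_1,k+1-a_1)$ to $\binom{k+1}{a_1}$, so its cokernel is $\mathbb Z/\gcd(\binom{k+1}{1},\dots,\binom{k+1}{k})\mathbb Z=\mathbb Z_{r_k}$. This yields the two values asserted in the lemma.

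\textbf{Reduction.} It then remains to prove that $\mathrm{Ext}^i(\Delta(\mathrm h),M)$ is unchanged as $(a,b)$ varies subject to $1\le k<b$, for all $i$ at once. I would argue in two moves. To lower $a$, I would strip a common box from the first rows of $\Delta((a,1^b))$ and of $D_{a+k}\otimes\wedge^{b-k}$ and invoke Kulkarni's degree reduction theorem \cite{Ku1} to identify the group with $\mathrm{Ext}^i(\Delta((a-1,1^b)),D_{a-1+k}\otimes\wedge^{b-k})$, iterating down to $a=1$. To lower $b$, I would use the short exact sequence $0\to\Delta(\mathrm h(k+1))\to D_{a+k}\otimes\wedge^{b-k}\to\Delta(\mathrm h(k))\to0$, which is the fundamental sequence of Section 2 applied to the hook $(a+k,1^{b-k})$ and is available exactly because $k<b$, together with the recursions for $P_*(a,b)$ and the block form of the differential, to induct on $b$.

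\textbf{Expected obstacle.} The real difficulty is the reduction, not the base case. For general $(a,b)$ the weight spaces of $M$ have rank greater than one—already for $(a,b,k)=(1,2,1)$ one finds $\mathrm{Hom}(P_0,M)=\mathbb Z^3$ and $\mathrm{Hom}(P_1,M)=\mathbb Z^4$—so the cokernel is the torsion of a Smith normal form of a sizeable integer matrix. The hard part will be to show that enlarging $a$ or $b$ only introduces torsion-free (split acyclic) summands, so that the surviving differential still carries precisely the binomial coefficients $\binom{k+1}{\cdot}$ of the base case; concretely, one must verify that the strictly increasing ``spectator'' entries supplied by the exterior factor $\wedge^{b-k}$ do not contribute to the torsion, either by checking the hypotheses of Kulkarni's theorem for the non-Weyl module $M$ or by producing the splitting directly from the $\phi_t$-description of the differential.
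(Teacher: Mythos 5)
Your base case is correct and complete: for $(a,b)=(1,k)$ every relevant weight space of $D_{k+1}$ has rank one, the differentials of $\mathrm{Hom}(P_*(1,k),D_{k+1})$ carry exactly the binomial coefficients you write down, and the torsion comes out as $\mathbb{Z}_2$ in degree $1$ and $\mathbb{Z}_{r_k}$ in degree $k$; this is a self-contained substitute for the paper's citations of \cite{Ak} and \cite{Ma}. The genuine gap is the reduction from general $(a,b)$ to this base case, which is the actual content of the lemma and which you yourself flag as the unresolved ``hard part.'' Your first move is not an application of Kulkarni's theorem: the degree reduction theorem, as invoked in the paper, does not strip a single box from the first row of each shape; it removes an entire exterior-power tensor factor $\wedge^m$ from the target and compensates by deleting $m$ boxes from the first column of the source shape, thereby turning the hook into a \emph{disconnected} skew shape rather than a smaller hook. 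Concretely, one application gives
\[
\mathrm{Ext}^i(\Delta(\mathrm{h}),D_{a+k}\otimes\wedge^{b-k})\cong\mathrm{Ext}^i(D_{a-1}\otimes\wedge^{k+1},D_{a+k}),
\]
where $D_{a-1}\otimes\wedge^{k+1}\cong\Delta\bigl((a,1^b)/(1^{b-k})\bigr)$, and not the isomorphism $\mathrm{Ext}^i(\Delta(a,1^b),D_{a+k}\otimes\wedge^{b-k})\cong\mathrm{Ext}^i(\Delta(a-1,1^b),D_{a+k-1}\otimes\wedge^{b-k})$ that your descent in $a$ requires; the latter is true only a posteriori, as a consequence of the lemma itself. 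Even granting it, descending to $a=1$ only reaches $\mathrm{Ext}^i(\wedge^{b+1},D_{k+1}\otimes\wedge^{b-k})$, which is still not the base case.

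Your second move is circular. The long exact sequence coming from $0\to\Delta(\mathrm{h}(k+1))\to D_{a+k}\otimes\wedge^{b-k}\to\Delta(\mathrm{h}(k))\to 0$ sandwiches the group you want between $\mathrm{Ext}^i(\Delta(\mathrm{h}),\Delta(\mathrm{h}(k+1)))$ and $\mathrm{Ext}^i(\Delta(\mathrm{h}),\Delta(\mathrm{h}(k)))$, which are precisely the unknown groups that the paper later computes \emph{from} the present lemma (Theorems 3.5 and 4.1); and the recursion $P_i(a,b)=P_{i-1}(a+1,b-1)\oplus D(a)\otimes P_i(1,b-1)$ only yields a block-triangular presentation matrix (Remark 2.1, Lemma 4.2), whose cokernel does not decompose into the cokernels of its diagonal blocks, so no induction on $b$ follows formally. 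The ingredient your plan is missing is contravariant duality (\cite{Ja}, p.~209, together with \cite{AB}, Theorem 7.7): it exchanges divided and exterior powers, so that Kulkarni's $\wedge$-removal can be applied a second time,
\[
\mathrm{Ext}^i(D_{a-1}\otimes\wedge^{k+1},D_{a+k})\cong\mathrm{Ext}^i(\wedge^{a+k},D_{k+1}\otimes\wedge^{a-1})\cong\mathrm{Ext}^i(\wedge^{k+1},D_{k+1}).
\]
With duality inserted, the whole reduction consists of these three isomorphisms and needs no induction on $a$ or $b$ at all. Note also that $M=D_{a+k}\otimes\wedge^{b-k}$ is a skew Weyl module, so Kulkarni's theorem applies to it directly; the worry in your last paragraph about ``the non-Weyl module $M$'' is not where the difficulty lies.
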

\begin{proof} A special case of the main result, Theorem 2,  of \cite{Ku1} yields $$\mathrm{Ext}^i(\Delta (a,1^b),D_{a+k} \otimes \wedge ^{b-k})=\mathrm{Ext}^i(D_{a-1} \otimes \wedge ^{k+1}, D_{a+k}).$$ Applying  contravariant duality \cite{Ja}, p. 209, and \cite{AB}, Theorem 7.7, we have $$\mathrm{Ext}^i(D_{a-1} \otimes \wedge ^{k+1}, D_{a+k})=\mathrm{Ext}^i(\wedge^{a+k}, D_{k+1} \otimes \wedge ^{a-1}).$$ Again by \cite{Ku1}, $\mathrm{Ext}^i(\wedge^{a+k}, D_{k+1} \otimes \wedge ^{a-1})=\mathrm{Ext}^i(\wedge^{k+1}, D_{k+1})$ and the first equality of the lemma follows.
	
	We have $\mathrm{Ext}^1(\wedge ^{k+1}, D_{k+1}) = \mathbb{Z}_2,$ by \cite{Ak}, Section 4, and $\mathrm{Ext}^k(\wedge ^{k+1}, D_{k+1}) = \mathbb{Z}_{r_k},$ according to \cite{Ma}, eqn. (6) p. 2207.
\end{proof}
\subsection{Summary of notation} For the reader's convenience we gather here some of the notation introduced in the previous subsections that will be used often.
\begin{itemize}[noitemsep]
		\item$\mathrm{h}=(a,1^b)$ and $\mathrm{h}(k)=(a+k,1^{b-k})$: hooks, where  $1\le k\le b$ and $r=a+b$, (subsection 2.1).
		\item ${i_1}^{(a_1)}  ...  {i_t}^{(a_t)} | {j_1} ... {j_b}$: the image in $\Delta(\mathrm{h})$, where $a=a_1+...+a_t$, of the element ${i_1}^{(a_1)}  ...  {i_t}^{(a_t)} \otimes {j_1} ... {j_b} \in D_a \otimes \wedge^b$ under the isomorphism  $cok(\theta_{a}) \simeq \Delta(\mathrm{h})$, (subsection 2.2).
		\item $P_{*}(a,b)$ and $\theta_{*}(a,b)$: projective resolution of $ \Delta(\mathrm{h})$ and the differential of $P_{*}(a,b)$ respectively, (subsection 2.3).
		\item $E^i(\Delta(\mathrm{h}),M)$: the cokernel of the map $\mathrm{Hom}(\theta_i(a,b),M)$, (subsection 2.3).
		\item $\phi_t$: a summand of the differential $\mathrm{Hom}(\theta_i(a,b),M)$ (defined before Remark 2.2).

\end{itemize}
\section{$\mathrm{Ext}^1(\Delta(\mathrm{h}),\Delta(\mathrm{h}(k)))$}
In this section we determine $\mathrm{Ext}^1(\Delta(\mathrm{h}),\Delta(\mathrm{h}(k)))$ for $k>1$. The case $k=1$ was computed in \cite{Ma}, Theorem 6.
\subsection{Matrices $e^{(1)}(a,b,M)$ and a generator} 
Let $M=D_{a+k} \otimes \wedge ^{b-k}$. For the semi-standard basis $B$ of the domain $\mathrm{Hom}(D(\mathrm{h}),M)$ $=\mathrm{Hom}(D(a,1,...,1),M)$ of the map $\mathrm{Hom}(\theta_1{(a,b)},M)$ we have $B=B_0 \cup B_1$, where $B_0$ (resp., $B_1$) consists of those elements of $B$ that have no (resp., exactly one) appearance of 1 in the $\wedge^{b-k}$ part. We consider the usual lexicographic ordering on $B$ and note that every element of $B_0$ is less than every element of $B_1$. We have $$|B|=\tbinom{b+1}{k+1}, |B_0|= \tbinom{b}{k}, |B_1|= \tbinom{b}{k+1}.$$

Likewise, for the semi-standard bases $B^1$, $B^2, ...,B^b $ of $\mathrm{Hom}(D(a+1,1,...,1),M)$, $\mathrm{Hom}(D(a,2,...,1),M)$, ... , $\mathrm{Hom}(D(a,1,...,2),M)$ respectively, we have $B^t=B^t_0 \cup B^t_1$, where $B_0^i$ (resp. $B_1^i$) consists of those elements of $B^i$ that have no (resp. exactly one) appearance of 1 in the $\wedge^{b-k}$ part, and thus \begin{equation*}B'=B^1_0 \cup B^1_1 \cup ... \cup B^b_0 \cup B^b_1\end{equation*} is a basis of the codomain of the map $\mathrm{Hom}(\theta_1{(a,b)},M)$. We order each set $B^i$ lexicographically and declare that every element of $B^i$ is less than every element of $B^{i+1}$. For each $i$ we have $$|B^i|=\tbinom{b}{k}, |B_0^i|= \tbinom{b-1}{k-1}, |B_1^i|= \tbinom{b-1}{k}.$$
Consider the matrix $e^{(1)}(a,b,M) \in M_{b\binom{b}{k} \times \binom{b+1}{k+1}}(\mathbb{Z})$ of the map $\mathrm{Hom}(\theta_1{(a,b)},M)$, with respect to the previous orderings, partitioned into $b$ row blocks according to $B'=B^1 \cup \cdots B^b $.

In the next Lemma, the missing entries of any matrix are assumed to be equal to 0.
\begin{lm} Let $p=\tbinom{b-1}{k-1}, q=\tbinom{b-1}{k}.$ The matrix  $e^{(1)}(a,b,M)$ has the following properties.\begin{enumerate} \item The first block is $\left( \begin{array}{c|c}  A(a,b;k) &B(a,b;k) \end{array}\right)$, where 	
 \begin{align*} &A(a,b;k)=diag(\underbrace{a+1,...,a+1}_{p},\underbrace{1,...,1}_{q}) \in M_{\binom{b}{k}}(\mathbb{Z}),  \\&B(a,b;k)= \left(
		\begin{array}{c|cc }
		&  &\\ \hline
		aI_{q} & &
		\end{array}
		\right) \in M_{\binom{b}{k} \times \binom{b}{k+1}}(\mathbb{Z}).\end{align*}
		\item The t-th block, $t>1$, is of the form $\left(
		\begin{array}{c|cc}
		C_t& \\ \hline
		&D_t
		\end{array}
		\right)$, where $C_t \in M_{p \times \binom{b}{k}}(\mathbb{Z})$, $D_t \in M_{q \times \binom{b}{k+1}}(\mathbb{Z})$.
		\item The sum of the elements in any row of the t-th block, where $t \ge 2$, is $(-1)^{t-1}2.$
	
	\item The last row is of the form $(0 \; ... \; 0 \; \pm 2)$.
	\end{enumerate}
\end{lm}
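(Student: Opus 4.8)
The plan is to reduce the entire matrix to the maps $\phi_t$ and then read off the four properties from an explicit description of each $\phi_t$ on the tensor-product basis of $M=D_{a+k}\otimes\wedge^{b-k}$. By Remark 2.3 we have $\mathrm{Hom}(\theta_1(a,b),M)=\sum_{t=1}^{b}(-1)^{t-1}\phi_t$, and since $\phi_t$ merges the $t$-th and $(t+1)$-st weight coordinates it lands precisely in the summand of $\mathrm{Hom}(P_1(a,b),M)$ indexed by $B^t$ (the summand $D(a+1,1^{b-1})$ when $t=1$, and $D(a,\dots,2,\dots,1)$ with the $2$ in position $t$ when $t\ge2$). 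Hence the $t$-th row block of $e^{(1)}(a,b,M)$ is exactly the matrix of $(-1)^{t-1}\phi_t$, and it suffices to compute each $\phi_t$ on $B$. Concretely, $\phi_t$ relabels every entry $j>t$ to $j-1$ and then re-standardizes within each tensor factor separately: the divided-power factor is reordered to be weakly increasing, collecting equal entries through the identities $1^{(c)}\cdot 1=(c+1)\,1^{(c+1)}$ and $t\cdot t=2\,t^{(2)}$, while the exterior factor is reordered to be strictly increasing (with a sign, and vanishing whenever a value is repeated). I will use these two numerical identities and the vanishing repeatedly.

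\textbf{Parts (2) and (1).} For part (2), observe that for $t\ge2$ the map $\phi_t$ only involves the entries $t,t+1\ge2$, so it neither creates nor destroys the entry $1$; since re-standardization acts within each tensor factor, the number of $1$'s in the exterior factor is preserved. Therefore $\phi_t$ carries $B_0$ into the span of $B_0^t$ and $B_1$ into the span of $B_1^t$, which is exactly the asserted block form with $C_t\in M_{p\times\binom{b}{k}}(\mathbb{Z})$ placed top-left, $D_t\in M_{q\times\binom{b}{k+1}}(\mathbb{Z})$ bottom-right, and zero off-diagonal blocks. For part (1) the map $\phi_1$ now merges $1$ and $2$, so it can move a $1$ between factors; I compute it in the four cases according to whether the entry $2$ sits in the divided-power or the exterior factor and whether the source lies in $B_0$ or $B_1$. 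The coefficients that arise are $a+1$ (source in $B_0$, $2$ in the divided-power factor, image in $B_0^1$), $1$ (source in $B_0$, $2$ in the exterior factor, image in $B_1^1$), $a$ (source in $B_1$, $2$ in the divided-power factor, image in $B_1^1$), and $0$ (source in $B_1$, $2$ in the exterior factor, killed by a repeated $1$). These assemble into $\left(A(a,b;k)\mid B(a,b;k)\right)$ with the stated $A$ and $B$.

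\textbf{Parts (3) and (4).} For part (3) I fix $T'\in B^t$ and determine all $T\in B$ with $T'$ in the support of $\phi_t(T)$. Since $T'$ has weight with a $2$ in position $t$ while every $T$ has all later coordinates equal to $1$, the two entries equal to $t$ in $T'$ must lift to one $t$ and one $t+1$ in $T$, and the rest of $T$ is then forced by relabeling entries $>t$ upward. If both $t$'s of $T'$ lie in the divided-power factor there is a single preimage, with coefficient $2$ by $t\cdot t=2\,t^{(2)}$; if one $t$ lies in each factor there are exactly two preimages (assign $t$ and $t+1$ to the two cells in the two possible ways), each contributing $+1$ after tracking the exterior re-standardization; both $t$'s in the exterior factor is impossible by strict increase. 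In every case the coefficients sum to $2$, so the row sum of the $t$-th block is $(-1)^{t-1}2$. Finally, part (4) follows by identifying the lexicographically largest elements of $B^b$ and of $B$: the top row of the last block is the single-preimage situation (both $b$'s in the divided-power factor), so it has a unique nonzero entry, located in the last column, and by part (3) this entry equals $(-1)^{b-1}2=\pm2$.

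\textbf{Main obstacle.} The principal difficulty is the order-compatibility bookkeeping in parts (1) and (4): converting the basis-level bijections into the literal shapes $A=\mathrm{diag}(a+1,\dots,a+1,1,\dots,1)$ and the $aI_q$ block of $B$, and locating the unique nonzero entry of the last row in the last column, requires verifying that the chosen lexicographic orderings on $B$, $B^1$ and $B^b$ are matched up by the relabeling maps $\phi_1$ and $\phi_b$. The sign analysis in the mixed case of part (3) --- confirming that the two preimages both contribute $+1$ rather than cancelling --- must also be carried out carefully, although it becomes routine once the strictly increasing re-standardization of the exterior factor is tracked.
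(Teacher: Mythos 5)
Your proposal is correct and follows essentially the same route as the paper's own proof: it reduces the matrix to the maps $\phi_t$, handles part (1) by the same four-case computation of $\phi_1$ (coefficients $a+1$, $1$, $a$, $0$), part (2) by the same observation that $\phi_t$ for $t\ge 2$ preserves the number of $1$'s in the exterior factor, part (3) by the same dichotomy (unique preimage with coefficient $2$ when both $t$'s are in the divided-power factor, two preimages each contributing $+1$ otherwise, the doubly-exterior case being excluded by strictness), and part (4) by identifying the lex-largest element of $B^b$ as being of the first type. Two harmless slips only: the differential formula you invoke is Remark 2.2 (not 2.3), and the row relevant to part (4) is the last, not the top, row of the last block.
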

\begin{proof}
(1)  The set $B_0$ consists of all $T=1^{(a)}i_1...i_k\otimes j_1...j_{b-k} \in B$ such that 
\begin{gather*}i_1 <... < i_k, \; \; \; j_1 <... < j_{b-k} \\
\{i_1 ,..., i_k\} \cup \{j_1 ,... , j_{b-k}\}=\{2,...,b+1\} \\
\{i_1 ,..., i_k\} \cap \{j_1 ,... , j_{b-k}\}= \emptyset.
\end{gather*}
The definition of $\phi_1$ yields
\begin{equation*}
\phi_1 (T)=\begin{cases} (a+1)1^{(a+1)}i'_2...i'_k\otimes j'_1...j'_{b-k}, & \mbox{if}\;i_1=2 \\ \; \; \; \; \; \; \; \; \; \; \; 1^{(a)}i'_2...i'_k\otimes 1j'_2...j'_{b-k}, & \mbox {if}\; i_1 \ne 2 \end{cases},
\end{equation*}
where $i'=i-1$. From this it easily follows that the matrix of the restriction of $\phi_1$ on the subgroup of  $ \mathrm{Hom}(D(\mathrm{h}),M)$ spanned by $B_0$ is $A(a,b;k)$. 

The set $B_1$ consists of all $S=1^{(a-1)}i_1...i_{k+1}\otimes j_1...j_{b-k} \in B$ such that 
\begin{gather*}i_1 <... < i_{k+1}, \; \; \;  j_1 <... < j_{b-k}, \; \; \; j_1=1 \\
\{i_1 ,..., i_{k+1}\} \cup \{j_1 ,... , j_{b-k}\}=\{1,...,b+1\} \\
\{i_1 ,..., i_{k+1}\} \cap \{j_1 ,... , j_{b-k}\}= \emptyset.
\end{gather*}
Then 
\begin{equation*}
\phi_1 (S)=\begin{cases} a1^{(a-1)}i'_2...i'_k\otimes 1j'_2...j'_{b-k}, & \mbox{if}\;j_2 \ne 2 \\  0, & \mbox {if}\; j_2 = 2 \end{cases},
\end{equation*}
From this it easily follows that the matrix of the restriction of $\phi_1$ on the subgroup of  $ \mathrm{Hom}(D(\mathrm{h}),M)$ spanned by $B_1$ is $B(a,b;k)$. 

(2) If $T=1^{(a)}i_1...i_k\otimes j_1...j_{b-k} \in B_0$, then $j_1 \ge 2$, and thus for $t \ge 2$,  $\phi_t(T)$ is a multiple  of an element of $B^t$ that does not contain a 1 in the $\wedge^{b-k}$ part. Hence  $\phi_t(T) \in spanB^t_0$. If $S=1^{(a-1)}i_1...i_{k+1}\otimes j_1...j_{b-k} \in B_1$, then $j_1=1$ and thus $\phi_t(T) \in spanB^t_1$ for all $t$.

(3) Suppose $T' \in B^t$. Since $t>1$, $t$ appears exactly twice in $T'$.\\
\textit{Case 1.} Suppose $T'$ is of the form $T'=xt^{(2)}y\otimes z.$ Since each element of $ B $ has weight $ (a,1,...,1) $, from the definition of $ \phi_t $, it follows that there is a unique $T \in B$ such that $\phi_t(T)=cT', c \ne 0,$ namely $T=xt(t+1)y_1\otimes z_1$, where $y_1$ and $z_1$ are obtained from $y$ and $z$ respectively by replacing each $ i>t $ by $i+1$. We have $\phi_t(T)=2T'$.\\
\textit{Case 2.} Suppose $T'$ is of the form $T'=xty\otimes ztw.$ Since each element of $ B $ has weight $ (a,1,...,1) $, from the definition of $ \phi_t $, it follows that there are exactly two $T_1, T_2 \in B$ such that $\phi_t(T_i)=c_iT', c_i \ne 0,$ namely $$T_1=x(t+1)y_1\otimes ztw_1, \; T_2=xty_1\otimes x(t+1)w_1,$$ where $y_1$ and $w_1$ are obtained from $y$ and $w$ respectively by replacing each $ i>t $ by $i+1$. We have $\phi_t(T_1)=\phi_t(T_2)=T'$.

(4) This follows from  case 1 of (3) since the greatest element in $B^k$ is $T'=1^{(a-1)}(b-k+1)...(b-1)b^{(2)}\otimes 12...(b-k)$. 
\end{proof}
Let $1\le k<b$ and $M=D_{a+k} \otimes \wedge ^{b-k}$. From Lemma 2.3 we have  $\mathrm{Ext}^1(\Delta(\mathrm{h}),M)$ $= \mathbb{Z}_2$. We will determine a generator of this $\mathrm{Ext}$ group.

We have mentioned that the torsion subgroup of the abelian group $E^1(\Delta(\mathrm{h}),M)$ with presentation matrix $e^{(1)}(a,b,M)$ is isomorphic to $\mathrm{Ext}^1(\Delta(\mathrm{h}),M)$. We denote by $\pi$ the natural projection $\pi:\mathrm{Hom}(P_{1}(a,b),M)\to E^1(\Delta(\mathrm{h}),M)$.
\begin{lm}
	Let $\mathrm{h}=(a,1^b)$, $1 \le k <b$ and $M=D_{a+k} \otimes \wedge ^{b-k}$. A cyclic generator of the abelian group $ \mathrm{Ext}^1(\Delta (\mathrm{h}),M)$ is $\pi(g_k)$, where
	\[g_k=  \tbinom{a+1}{2} \sum\limits_{T \in B_0^1} T + a \sum\limits_{T \in B_1^1} T + \sum\limits_{i=2}^{b}(-1)^{i-1} \Big( a\sum\limits_{T \in B_0^i}T + \sum\limits_{T \in B_1^i} T \Big).\] 
\end{lm}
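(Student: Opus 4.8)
The plan is to use the identification from Subsection 2.3, by which $\mathrm{Ext}^1(\Delta(\mathrm{h}),M)$ is the torsion submodule of the cokernel $E^1(\Delta(\mathrm{h}),M)$ of the map with presentation matrix $e^{(1)}(a,b,M)$; by Lemma 2.3 this torsion submodule is $\mathbb{Z}_2$. Since $\pi$ is exactly the projection onto this cokernel, it suffices to show that $\pi(g_k)$ has order exactly $2$, that is: (i) $2g_k$ lies in the column space (image) of $e^{(1)}(a,b,M)$, and (ii) $g_k$ does not. Granting both, $\pi(g_k)$ is a nonzero element whose order divides $2$, hence of order exactly $2$, and since the torsion submodule is $\mathbb{Z}_2$ it must be a generator.

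For (i) I would exhibit an explicit preimage. Writing $\mathbf{1}_{B_0}=\sum_{S\in B_0}S$ and $\mathbf{1}_{B_1}=\sum_{S\in B_1}S$, the candidate is $x=a\,\mathbf{1}_{B_0}+\mathbf{1}_{B_1}$, and the claim is $e^{(1)}(a,b,M)\,x=2g_k$, verified block by block against Lemma 3.1. On the first block, part (1) gives that $A(a,b;k)$ sends $\mathbf{1}_{B_0}$ to $(a+1)\sum_{T\in B_0^1}T+\sum_{T\in B_1^1}T$ and $B(a,b;k)$ sends $\mathbf{1}_{B_1}$ to $a\sum_{T\in B_1^1}T$; weighting by $a$ and adding yields $a(a+1)\sum_{B_0^1}T+2a\sum_{B_1^1}T$, which is the first block of $2g_k$ because $2\binom{a+1}{2}=a(a+1)$. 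On a block $t\ge 2$, part (2) shows that the contributions of $\mathbf{1}_{B_0}$ and of $\mathbf{1}_{B_1}$ are supported on $B_0^t$ and on $B_1^t$ respectively and do not interfere, while part (3) shows that in each case the coefficient of every basis vector in the support is the row sum $(-1)^{t-1}2$; hence the $t$-th block of $e^{(1)}(a,b,M)\,x$ is $(-1)^{t-1}\big(2a\sum_{B_0^t}T+2\sum_{B_1^t}T\big)$, which is the $t$-th block of $2g_k$.

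For (ii) I would use part (4) of Lemma 3.1: the last row of $e^{(1)}(a,b,M)$ is $(0\ \dots\ 0\ \pm 2)$, so all of its entries are even, and therefore the last coordinate of any integral combination of the columns is even. On the other hand the last coordinate of $g_k$, namely the coefficient of the greatest basis vector $T'=1^{(a-1)}(b-k+1)\cdots b^{(2)}\otimes 12\cdots(b-k)$, which lies in $B_1^b$, equals $(-1)^{b-1}$ and is odd. Hence $g_k$ is not an integral combination of the columns, so $\pi(g_k)\ne 0$, and (ii) follows.

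The main obstacle is step (i): once the preimage $x=a\,\mathbf{1}_{B_0}+\mathbf{1}_{B_1}$ is guessed the verification is routine, but locating it is the crux, and the blocks $t\ge 2$ are slightly delicate since Lemma 3.1 records there only the aggregate row-sum data rather than the individual matrix entries. The feature that makes the computation close is part (2): because the $B_0$- and $B_1$-columns have disjoint supports in those blocks, the row-sum value $(-1)^{t-1}2$ of part (3) passes directly to the coefficient of each basis vector of the images of $\mathbf{1}_{B_0}$ and $\mathbf{1}_{B_1}$.
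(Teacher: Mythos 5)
Your proposal is correct and follows essentially the same route as the paper: the paper's proof forms exactly the column combination $a(E_1+\cdots+E_{\binom{b}{k}})+E_{\binom{b}{k}+1}+\cdots+E_{\binom{b+1}{k+1}}$ (i.e.\ applies the matrix to your vector $x=a\,\mathbf{1}_{B_0}+\mathbf{1}_{B_1}$), identifies the result with $2g_k$ via Lemma 3.1(1)--(3), and then rules out $\pi(g_k)=0$ by the same parity argument comparing the odd last coordinate of $g_k$ with the even last row from Lemma 3.1(4). The only difference is presentational (columns of the matrix versus the image of an explicit preimage vector), so there is nothing substantive to add.
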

\begin{proof}
Let $E_i$ be the i-th column of $e^{(1)}(a,b,M)$ and let $p=\binom{b-1}{k-1}, q=\binom{b-1}{k}$. Consider the $e^{(1)}(a,b,M)$ partitioned into $b$ blocks each consisting of $p+q=\binom{b}{k}$ consecutive rows.  From Lemma 3.1 it follows that 
\begin{align} \nonumber & a\big(E_1+...+E_{\binom{b}{k}}\big)+E_{\binom{b}{k}+1}+...+E_{\binom{b+1}{k+1}}= \\  \nonumber &\Big( \underbrace{a(a+1),...,a(a+1)}_{p}, 
\underbrace{2a,...,2a}_{q}, 
\underbrace{-2a,...,-2a}_{p}, 
\underbrace{-2,...,-2}_{q},..., \\ \label{lin_com}
&\underbrace{(-1)^{b-1}2a,...,(-1)^{b-1}2a}_{p}, 
\underbrace{(-1)^{b-1}2,...,(-1)^{b-1}2}_{q} \Big)^t. 
\end{align}
Hence in the cokernel $E^1(\Delta(\mathrm{h}),M)$ of the differential $\mathrm{Hom}(\theta_1(a,b),M)$ we have $2\pi(g_k)=0$. This shows that $\pi(g_k) \in \mathrm{Ext}^1(\Delta (\mathrm{h}),M)$. We have $\pi(g_k)\ne 0$, since otherwise the integer matrix-column  $\frac{1}{2} X$, where $X$ is the right hand side of (\ref{lin_com}), would be a $\mathbb{Z}$ - linear combination of columns of  $e^{(1)}(a,b,M)$. This is not possible because the last entry of  $\frac{1}{2} X$ is $\pm1 $ while all entries of the last row of $e^{(1)}(a,b,M)$, $k>0$, are even according to Lemma 3.1 (4). Since 
$ \mathrm{Ext}^1(\Delta(\mathrm{h}),M)= \mathbb{Z}_2$, it follows that $\pi(g_k)$ generates $\mathrm{Ext}^1(\Delta(\mathrm{h}),M)$.\end{proof}

\subsection{Proof for $\mathrm{Ext}^1(\Delta(\mathrm{h}),\Delta (\mathrm{h}(k)))$.} We determine $\mathrm{Ext}^1(\Delta(\mathrm{h}),\Delta(\mathrm{h}(k))), k \ge 2$ in this subsection. The main computation is done in the next two lemmas the first of which takes care of the case $k=2$. 

We will use the following notation. If $f:M \to N$ is a map of $S(n,r)$-modules, we have in the usual way various induced maps \begin{align*}\mathrm{Hom}(P_i(a,b),M) &\to \mathrm{Hom}(P_{i}(a,b),N), \\E^i(\Delta(\mathrm{h}),M) &\to E^i(\Delta(\mathrm{h}),N), \\ \mathrm{Ext}^i(\Delta(\mathrm{h}),M) &\to \mathrm{Ext}^i(\Delta(\mathrm{h}),N)\end{align*} of abelian groups which will all will be denoted by $f^*$. For an integer $m$ let $\epsilon_m$ be the remainder of the division of $m$ by 2.

In the statement of the next Lemma, we recall that $\mathrm{\mathrm{Ext}}^1(\Delta(\mathrm{h}),D_{a+1} \otimes \wedge ^{b-1}) = \mathbb{Z}_2$ according to Lemma 2.3. Also from \cite{Ma}, Theorem 6, we have that $\mathrm{Ext}^1(\Delta(\mathrm{h}),\Delta(\mathrm{h}(1)))$ is a cyclic group.

\begin{lm}
	Let $\mathrm{h}=(a,1^b)$ and $\mathrm{h}(1)=(a+1,1^{b-1}),  b \ge 2$. The map $$ \mathrm{\mathrm{Ext}}^1(\Delta(\mathrm{h}),D_{a+1} \otimes \wedge ^{b-1}) \xrightarrow{\pi_0^{*}} \mathrm{Ext}^1(\Delta(\mathrm{h}),\Delta(\mathrm{h}(1)))$$
	induced by $D_{a+1} \otimes \wedge ^{b-1}  \xrightarrow{\pi_0} \Delta (\mathrm{h}(1))$ is multiplication by the integer $ \frac{(a+ \epsilon_b-1)(a+b)}{2} $. 
\end{lm}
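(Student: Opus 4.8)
The plan is to evaluate $\pi_0^*$ on the generator supplied by Lemma 3.2. Since $b\ge 2$ we are in the range $1\le 1<b$, so by Lemma 2.3 the source $\mathrm{Ext}^1(\Delta(\mathrm{h}),D_{a+1}\otimes\wedge^{b-1})$ is $\mathbb{Z}_2$, generated by $\pi(g_1)$, where $g_1\in\mathrm{Hom}(P_1(a,b),D_{a+1}\otimes\wedge^{b-1})$ is the explicit element of Lemma 3.2 with $k=1$. As $\pi_0^*$ is induced by post-composition with $\pi_0$ and our identifications are through weight spaces, we have $\pi_0^*(\pi(g_1))=\pi(\pi_0^*(g_1))$, where $\pi_0^*(g_1)\in\mathrm{Hom}(P_1(a,b),\Delta(\mathrm{h}(1)))$ is obtained by sending each semi-standard tableau occurring in $g_1$ to $\Delta(\mathrm{h}(1))$ via the quotient map and then straightening. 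Thus the computation splits into two parts: (i) compute $\pi_0^*(g_1)$ in the codomain, and (ii) reduce it modulo the image of the differential to express it as a multiple of a generator of the (cyclic) target.

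First I would carry out (i). Each element of $B_0^1$ is already standard and maps to $1^{(a+1)}|2\cdots b$, and each element of $B_0^t$ maps to $1^{(a)}t|2\cdots b$; by contrast every element of $B_1^1$ and $B_1^t$ has $j_1=1=i_1$ after applying $\pi_0$ and must be straightened, the only delicate point being the sign incurred when the resulting exterior factor is reordered into increasing order. Collecting terms by the blocks $B^1,\dots,B^b$ and evaluating the alternating sums $\sum_m(-1)^{m+1}$ over the relevant index sets (which equal $\epsilon_b-1$), I expect to obtain
\[
\pi_0^*(g_1)=\Big(\tbinom{a+1}{2}+a(\epsilon_b-1)\Big)\,v_1+(a+\epsilon_b-1)\sum_{t=2}^{b}(-1)^{t-1}v_t,
\]
where $v_1=1^{(a+1)}|2\cdots b$ lies in block $B^1$ and $v_t=1^{(a)}t|2\cdots b$ lies in block $B^t$ for $t\ge 2$. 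This is the step where the parity of $b$, hence $\epsilon_b$, enters.

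Next I would do (ii). By Remark 2.2 the differential is $\sum_t(-1)^{t-1}\phi_t$, so the relations defining $E^1(\Delta(\mathrm{h}),\Delta(\mathrm{h}(1)))$ are $R_m=\sum_t(-1)^{t-1}\phi_t(S_m)$, where $S_2,\dots,S_{b+1}$, with $S_m=1^{(a)}m\,|\,(\{2,\dots,b+1\}\setminus\{m\})$, is the semi-standard basis of the weight-$(a,1^b)$ space $\mathrm{Hom}(P_0(a,b),\Delta(\mathrm{h}(1)))$. Each $\phi_t(S_m)$ is either $0$ or $\pm v_s$; the decisive feature is that $\phi_1$ is genuine relabelling in the divided power algebra, so the collision of indices at $1$ produces a binomial factor, namely $\phi_1(S_2)=(a+1)v_1$ (in accordance with the diagonal entry $a+1$ of $A(a,b;1)$ in Lemma 3.1), while all other nonzero $\phi_t(S_m)$ have coefficient $\pm1$. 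Solving the resulting relations yields $v_t\equiv(a+t-1)v_1$ for $2\le t\le b$ and $(a+b)v_1\equiv0$ in $E^1$, so the target is cyclic, generated by the class of $v_1$. Substituting $v_t\equiv(a+t-1)v_1$ into the formula above and simplifying the elementary sums $\sum_{t=2}^b(-1)^{t-1}$ and $\sum_{t=2}^b(-1)^{t-1}(t-1)$ (again split by the parity of $b$), I expect the coefficient of $v_1$ to reduce, modulo $(a+b)v_1\equiv0$, to $\tfrac{(a+\epsilon_b-1)(a+b)}{2}$, which is the asserted multiplier.

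The main obstacle is the bookkeeping in (ii): one must keep $\phi_t$ as relabelling in the divided power algebra, so that collisions at the index $1$ contribute the factor $a+1$ rather than $1$; omitting this factor alters the order of the target and breaks the consistency check $2\cdot\pi_0^*(\pi(g_1))=0$ forced by $2\pi(g_1)=0$ and by the fact that $\pi_0^*$ commutes with the differential. Once these coefficients and the exterior-reordering signs are tracked correctly, the two cases $\epsilon_b=0$ and $\epsilon_b=1$ both collapse (for even $b$ one uses $(a+b)v_1\equiv0$, equivalently $-N\equiv N$) to multiplication by $\tfrac{(a+\epsilon_b-1)(a+b)}{2}$.
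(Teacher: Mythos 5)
Your proposal is correct and follows essentially the same route as the paper: evaluate $\pi_0^*$ on the generator $\pi(g_1)$ of Lemma 3.2, compute its image in the semi-standard basis of $\mathrm{Hom}(P_1(a,b),\Delta(\mathrm{h}(1)))$ via the straightening law (your formula in step (i) is exactly the paper's), and reduce modulo the relations of the cokernel to obtain the multiplier $\frac{(a+\epsilon_b-1)(a+b)}{2}$. The only difference is cosmetic: the paper quotes the presentation matrix of $E^1(\Delta(\mathrm{h}),\Delta(\mathrm{h}(1)))$ from \cite{Ma} and uses the single aggregate relation $\sum_{i=2}^{b}(-1)^{i-1}\pi(S_i)=[\tfrac{b}{2}]\pi(S_1)$, whereas you rederive the individual relations $v_t\equiv(a+t-1)v_1$ and $(a+b)v_1\equiv 0$ from the differential $\sum_t(-1)^{t-1}\phi_t$ and substitute them, which entails the extra (correctly handled) reduction $-N\equiv N$ modulo $a+b$ when $b$ is even.
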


\begin{proof} The matrix of the map $\mathrm{Hom}(P_0(a,b),\Delta(\mathrm{h}(1))) \rightarrow \mathrm{Hom}(P_1(a,b),\Delta(\mathrm{h}(1)))$ with respect to the lexicographic order of semi-standard bases is the following $b \times b$ matrix according to \cite{Ma}, p. 2211,
\[
\left(
\begin{array}{cccccc}
a+1 &-1 &1 & \dots & &(-1)^{b-1} \\
-1  &-1 &0 & \dots & &0 \\
0   &1  &1 &  \dots & &0  \\
\vdots &\vdots &\vdots & & &\vdots \\
0 &0 &0 &\dots &(-1)^{b-1} &(-1)^{b-1} 
\end{array}
\right).
\]
Let $$S_1=1^{(a+1)}|23...b, S_2=1^{(a)}2|23...b,...,S_b=1^{(a)}b|23...b$$ be the semi-standard basis of 
$\mathrm{Hom}(P_1(a,b),\Delta(\mathrm{h}(1)))$. From the above matrix it follows that for each $i$ there is an integer $m_i$ such that $\pi(S_i) = m_i\pi(S_1)$, and hence a cyclic generator of $E^1(\Delta(\mathrm{h}),\Delta(\mathrm{h}(1)))$ is $\pi(S_1)$. Moreover by adding the even numbered columns $2,4,...,2[\frac{b}{2}]$ we have the relation $$\sum\limits_{i=2}^{b}(-1)^{i-1}\pi(S_i) =[\frac{b}{2}]\pi(S_1)$$
where $[\frac{b}{2}]$ is the largest integer less than or equal to $\frac{b}{2}$.

Using the notation established at the beginning of subsection 3.1, we have
$B_0^1=\{T_1\}$ and $B_1^1=\{T_2,..,T_b\}$, where $T_1=1^{(a+1)}\otimes 23...b$ and $T_j=1^{(a)}j\otimes 12...\widehat{j}...b, j=2,...,b$, and it is understood that $\widehat{j}$ means that $j$ is omitted. Now $\pi_0^*(T_1)=S_1$. Also  $$\pi_0^*(T_j)=1^{(a)}j|12...\widehat{j}...b = -1^{(a+1)}|j2...\hat{j}...b=(-1)^{j-1}S_1, $$ where in the second equality the  straightening law was used. Thus 
\[ \pi_0^* \Big( \tbinom{a+1}{2} \overline{B_0^1} + a \overline{B_1^1} \Big)  = \Big(\tbinom{a+1}{2} + a(\epsilon_b-1) \Big)S_1,
\]
where $ \overline{X} = \sum_{T \in X}T$, if  $X$ is one of the sets $B_0^i, B_1^i$. A similar computation for $j=2,..,b$ yields 
\[ \pi_0^* \Big( \overline{B_0^j}  +  \overline{ B_1^j} \Big) = \big(a +  \epsilon_b-1 \big)S_j, 
\]
and therefore in $E^1(\Delta(\mathrm{h}),\Delta(\mathrm{h}(1)))$ we have
\begin{align*} \nonumber 
\pi_0^*(\pi(g_1)) & = \Big(\tbinom{a+1}{2} + a(\epsilon_b-1) \Big)\pi(S_1) +\big( a+ \epsilon_b -1 \big)  \sum\limits_{i=2}^{b} (-1)^{i-1} \pi(S_i) \\
& =\Big(\tbinom{a+1}{2} + a(\epsilon_b-1)  + \big( a+ \epsilon_b -1 \big)[\frac{b}{2}] \Big) \pi(S_1).
\end{align*}
It is easy to verify that $\tbinom{a+1}{2} + a(\epsilon_b -1)  + \big( a+ \epsilon_b -1 \big)[\frac{b}{2}] = \frac{(a+ \epsilon_b -1)(a+b)}{2} $. We have shown that the map $ \mathrm{Ext}^1(\Delta(\mathrm{h}),D_{a+1} \otimes \wedge ^{b-1}) \xrightarrow{\pi_0^{*}} \mathrm{Ext}^1(\Delta (\mathrm{h}),\Delta (\mathrm{h}(1)))$ is multiplication by this integer. Since $g_1$ is a generator of $\mathrm{Ext}^1(\Delta(\mathrm{h}),D_{a+1} \otimes \wedge ^{b-1})$ (Lemma 3.2) which is the torsion submodule of $E^1(\Delta(\mathrm{h}),D_{a+1} \otimes \wedge ^{b-1})$, the result follows.\end{proof}
Since multiplication in the divided power algebra is commutative, we will often denote a semi-standard basis element of the form  $1^{(a)}i_1...i_st^{(2)}i_{s+1}...i_{k-2}\otimes j_1...j_{b-k}$ by $1^{(a)}t^{(2)}i_1...i_{k-2}\otimes j_1...j_{b-k}$ and likewise for $1^{(a)}i_1...i_sti_{s+1}...i_{k-1}\otimes j_1...t...j_{b-k-1}$.

The $\mathrm{Ext}$ groups appearing in the next Lemma are both equal to $\mathbb{Z}_2$ by Lemma 2.3. We want to identify a particular map between these.

\begin{lm}
	Let $\mathrm{h}=(a,1^b)$ and $ 1<k<b$. The map $$ \mathrm{Ext}^1(\Delta (\mathrm{h}),D_{a+k} \otimes \wedge ^{b-k}) \xrightarrow{\theta^{*}} \mathrm{Ext}^1(\Delta(\mathrm{h}),D_{a+k-1} \otimes \wedge ^{b-k+1})$$
	induced by $D_{a+k} \otimes \wedge ^{b-k}  \xrightarrow{\theta} D_{a+k-1} \otimes \wedge ^{b-k+1}$ is multiplication by $a+ \epsilon_{b-k+1}-1 $.
\end{lm}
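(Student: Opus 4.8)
The plan is to compute the image $\theta^{*}(\pi(g_k))$ directly, where $g_k$ is the explicit generator of $\mathrm{Ext}^1(\Delta(\mathrm{h}),M)=\mathbb{Z}_2$, $M=D_{a+k}\otimes\wedge^{b-k}$, produced in Lemma 3.2, and then to identify it as a multiple of the corresponding generator $g_{k-1}$ of $\mathrm{Ext}^1(\Delta(\mathrm{h}),N)=\mathbb{Z}_2$, $N=D_{a+k-1}\otimes\wedge^{b-k+1}$ (this generator is available since $1\le k-1<b$). Since $\theta^{*}$ is induced by post-composition with the module map $\theta\colon M\to N$, it commutes with the projections $\pi,\pi'$ onto $E^1(\Delta(\mathrm{h}),M)$ and $E^1(\Delta(\mathrm{h}),N)$ and with the differential $\mathrm{Hom}(\theta_1(a,b),-)$. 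Hence $\theta^{*}(\pi(g_k))=\pi'(\theta^{*}(g_k))$, and it suffices to evaluate $\theta$ on each semi-standard tableau occurring in $g_k$ and reduce the outcome modulo the image of $\mathrm{Hom}(\theta_1(a,b),N)$. Under the weight-space identification of subsection 2.3, $\theta^{*}$ is simply the restriction of $\theta=(1\otimes m)(\triangle_{a+k-1,1}\otimes 1)$ to each weight space: it removes one letter $e_i$ from the divided-power part of a tableau, with the divided-power comultiplication coefficient, and wedges it onto the exterior part. A decisive simplification over Lemma 3.3 is that here the target $N$ is a genuine tensor product, so its semi-standard basis needs no straightening law; after applying $\theta$ one only reorders each wedge into increasing order, picking up a sign, and discards any term in which the inserted letter already occurs.

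Next I would run this computation family by family. For $T=1^{(a+1)}i_1\cdots i_{k-1}\otimes j_1\cdots j_{b-k}\in B_0^1$, splitting off $e_1$ (coefficient $1$ in divided powers) yields $1^{(a)}i_1\cdots i_{k-1}\otimes 1\,j_1\cdots j_{b-k}$, a tableau with exactly one $1$ in the exterior part, i.e.\ of $B_1$-type for $N$; splitting off a letter $e_{i_s}$ contributes, after sorting its wedge, a signed $B_0$-type element for $N$. The elements of $B_1^1$ and of the blocks $B_0^i,B_1^i$ with $i\ge 2$ are handled the same way. Carrying this out for all terms of $g_k$ and sorting, I obtain $\theta^{*}(g_k)$ as an explicit integer combination of the semi-standard basis of $N$, organized along the blocks $B^1,\dots,B^b$. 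Because the target group is $\mathbb{Z}_2$, the entire reduction may be performed modulo $2$: "multiplication by $a+\epsilon_{b-k+1}-1$" as a map $\mathbb{Z}_2\to\mathbb{Z}_2$ depends only on the parity of that integer, so it suffices to determine the class of $\theta^{*}(g_k)$ in $\mathbb{Z}_2$, which discards all even-coefficient contributions and turns the claim into a parity count.

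I would then reduce $\theta^{*}(g_k)$ modulo the image of the differential exactly as in Lemma 3.3: the columns of the presentation matrix $e^{(1)}(a,b,N)$, whose block structure is governed by Lemma 3.1, furnish relations among the classes of the basis blocks in $E^1(\Delta(\mathrm{h}),N)$, in particular an alternating relation of the type $\sum_{i\ge 2}(-1)^{i-1}\pi'(S_i)=[b/2]\,\pi'(S_1)$ used there. Collecting the $B_0$- and $B_1$-contributions and applying these relations expresses $\pi'(\theta^{*}(g_k))$ as an integer multiple of $\pi'(g_{k-1})$. Two sources feed the coefficient: the coefficients $\binom{a+1}{2}$, $a$, $1$ of $g_k$ combined with the multiplicity-one comultiplication coefficients, and an alternating sum over the $b-k+1$ positions of the exterior part of $N$ into which the split-off letter is inserted; the former simplifies to $a-1$ and the latter produces the parity correction $\epsilon_{b-k+1}$, giving the asserted multiplier $a+\epsilon_{b-k+1}-1$. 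Since $g_{k-1}$ generates $\mathrm{Ext}^1(\Delta(\mathrm{h}),N)$ (Lemma 3.2) and this group is $\mathbb{Z}_2$ (Lemma 2.3), the class $\pi'(\theta^{*}(g_k))$ pins down $\theta^{*}$ as multiplication by this integer.

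The main obstacle will be the sign- and coefficient-bookkeeping in the middle step: tracking the signs produced when the split-off letter is sorted into the wedge, correctly distributing the resulting tableaux into the blocks $B_0^i,B_1^i$ for $N$, and verifying that after reduction modulo the column space of $e^{(1)}(a,b,N)$ the many cross-terms cancel and leave precisely $(a+\epsilon_{b-k+1}-1)\,g_{k-1}$. Working modulo $2$ throughout, which is legitimate because the target is $\mathbb{Z}_2$, removes most of the coefficient burden, so that the crux is isolated to the parity of the alternating sum over the exterior positions of $N$, and it is exactly this parity that yields $\epsilon_{b-k+1}$.
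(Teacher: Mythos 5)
Your proposal is correct and follows essentially the paper's own route: the paper likewise applies $\theta^{*}$ to the generator $g_k$ of Lemma 3.2, evaluates it blockwise on the semi-standard basis of the codomain — the alternating sum over the $b-k+1$ insertion positions produces exactly the $\epsilon_{b-k+1}$ you predict, via the formulas $\theta^{*}(\overline{B^t_0})=\epsilon_{b-k+1}\overline{C^t_0}+\overline{C^t_1}$ and $\theta^{*}(\overline{B^t_1})=-\epsilon_{b-k}\overline{C^t_1}$ — and then identifies $\theta^{*}(g_k)-(a+\epsilon_{b-k+1}-1)g_{k-1}$ as $-\binom{a}{2}$ times the relation obtained by summing the first $\binom{b}{k-1}$ columns of $e^{(1)}(a,b,D_{a+k-1}\otimes\wedge^{b-k+1})$ (Lemma 3.1(1)--(3)), which is the relevant relation here rather than the one of Lemma 3.3, whose target is $\Delta(\mathrm{h}(1))$. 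The only point needing care is your mod-$2$ shortcut (the paper works with exact integer coefficients): it is legitimate, but since discarding even-coefficient terms changes classes by elements of $2E^1(\Delta(\mathrm{h}),D_{a+k-1}\otimes\wedge^{b-k+1})$, you should add the one-line observation that $E^1\cong\mathbb{Z}_2\oplus(\mathrm{free})$ forces the torsion subgroup to meet $2E^1$ trivially, so the comparison of the two torsion classes $\theta^{*}(\pi(g_k))$ and $(a+\epsilon_{b-k+1}-1)\pi'(g_{k-1})$ is unaffected.
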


\begin{proof} Consider the semi-standard basis $B'=B^1_0 \cup B^1_1 \cup ... \cup B^b_0 \cup B^b_1$, with the notation as in the beginning of subsection 3.1, of the codomain of the map $\mathrm{Hom}(\theta_1{(a,b)},D_{a+k} \otimes \wedge ^{b-k})$. Likewise we denote by $C'=C^1_0 \cup C^1_1 \cup ... \cup C^b_0 \cup C^b_1$ the semi-standard basis of the codomain of the map $\mathrm{Hom}(\theta_1{(a,b)},D_{a+k-1} \otimes \wedge ^{b-k+1})$. If $ X $ is any one of the sets $B_0^t, B_1^t, C_0^t, C_1^t$, we let $ \overline{X} = \sum_{T \in X}T .$ 

We claim that for each $t=1,...,b,$
 \begin{align}
&\theta^{*}(\overline{B^t_0}) =\epsilon_{b-k+1} \overline{C^t_0}+\overline{C^t_1} \label{claim1}, \\
&\theta^{*}(\overline{B^t_1}) =-\epsilon_{b-k} \overline{C^t_1}\label{claim2}.
\end{align}

Indeed, let $t>1$. We note that $B_0^t$ consists of all $1^{(a)}t^{(2)}i_1...i_{k-2}\otimes j_1...j_{b-k}$ such that
 \begin{gather*}i_1<...<i_{k-2}, \; \; j_1<...<j_{b-k},\\ \{ i_1,...,i_{k-2} \} \cap  \{ j_1,...,j_{b-k} \} = \emptyset, \; \{ i_1,...,i_{k-2} \} \cup  \{ j_1,...,j_{b-k} \} = \{2,...,\widehat{t},...,b\} \end{gather*}
and of all
$1^{(a)}ti_1...i_{k-1}\otimes j_1...t...j_{b-k-1}$ such that
 \begin{gather*}i_1<...<i_{k-1}, \; \; j_1<...<t<...<j_{b-k-1},\\ \{ i_1,...,i_{k-1} \} \cap  \{ j_1,...,j_{b-k-1} \} = \emptyset, \; \{ i_1,...,i_{k-1} \} \cup  \{ j_1,...,j_{b-k-1} \} = \{2,...,\widehat{t},...,b\} \end{gather*}


The definition of $\theta^*$ on the above elements yields 
 \begin{align}
 \nonumber\theta^{*}(1^{(a)}t^{(2)}i_1...i_{k-2}\otimes j_1...j_{b-k}) 
 = &1^{(a-1)}t^{(2)}i_{1}...i_{k-2}\otimes1j_1...j_{b-k}\\ \nonumber
 &+1^{(a)}ti_1...i_{k-2}\otimes tj_1...j_{b-k}\\
 &+\sum_{u=1}^{k-2}1^{(a)}t^{(2)}i_1...\widehat{i_u}...i_{k-2} \otimes i_uj_1...j_{b-k})\label{diaf1},\end{align}
 \begin{align}
 \nonumber\theta^{*} (1^{(a)}ti_1...i_{k-1}\otimes j_1...t...j_{b-k-1}) 
 = &1^{(a-1)}ti_1...i_{k-1}\otimes1j_1...t...j_{b-k-1}\\
 &+\sum_{u=1}^{k-1}1^{(a)}ti_1...\widehat{i_u}...i_{k-1} \otimes i_uj_1...t...j_{b-k-1})\label{diaf2},\end{align}
where $\widehat{i_u}$ means that $i_u$ is omitted. We note that each term in the right hand side of equations (3.4) and (3.5) is of the form $\pm S$, where $S \in C^t$. Moreover, the terms in the right hand side of (3.4) are distinct and those in the right hand side of (3.5) are distinct. Now let $ S \in C^t=C^t_0 \cup C^t_1. $ \\
(1) Let $S \in C_0^t$.
	\begin{enumerate}
	\item[(a)] Suppose $S=1^{(a)}tu_1...u_{k-2} \otimes v_1...t...v_{b-k}$. From (3.4) and (3.5), it follows that the elements $T_i \in B^t$ such that $S$ appears with nonzero coefficient in $\theta^*(T_i)$ are
	\begin{align*}
	&&T_0&=1^{(a)}t^{(2)}u_1...u_{k-2}\otimes v_1...v_{b-k}, \\
	&&T_i&=1^{(a)}tu_1...u_{k-2}v_i\otimes v_1...\widehat{v_i}...t...v_{b-k}, \; i=1,...,b-k.
	\end{align*}
	Moreover by straightening the $\wedge^{b-k+1}$ part, the coefficient of $S$ in $\theta^*(T_0)$ is $ (-1)^s $, $s=\#  \{i:v_i<t\}$, and the coefficient of $S$ in $\theta^*(T_i)$ is $$\begin{cases} (-1)^{i-1}, & \mbox{if}\; \mbox{$i\le s$} \\ (-1)^i, &  \mbox{$i\ge s+1$}. \end{cases}$$
	Therefore, by summing over $B_0^t$ be see that the coefficient of $S$ in $\theta^{*}(\overline{B^t_0})$ is $\sum_{i=1}^{b-k+1}(-1)^{i-1}=\epsilon_{b-k+1}.$
	\item[(b)] Suppose $S=1^{(a)}t^{(2)}u_1...u_{k-3}\otimes v_1...v_{b-k+1}$. From (3.4) and (3.5) it follows that the elements $T_i \in B^t$ such that $S$ appears with nonzero coefficient in $\theta^*(T_i)$ are
	\begin{align*}
	&&T_i=1^{(a)}t^{(2)}u_1...u_{k-3}v_i\otimes v_1...\widehat{v_i}...t...v_{b-k}, \; i=1,...,b-k.
	\end{align*}
	Moreover by straightening the $\wedge^{b-k+1}$ part, the coefficient of $S$ in $\theta^*(T_0)$ is $(-1)^{i-1}$. Thus summing over $B_0^t$, the coefficient of $S$ in $\theta^*(\overline{B_0^t})$ is $\sum_{i=1}^{b-k+1}(-1)^{i-1}=\epsilon_{b-k+1}$.\end{enumerate}
	(2) Let $S \in C_1^t$.
	\begin{enumerate}
	\item[(a)]	Suppose $S=1^{(a-1)}t^{(2)}u_1...u_{k-2}\otimes 1v_1...v_{b-k}$. From (3.4) and (3.5), it follows that there is a unique $T \in B_0^t$ such that $S$ appears with nonzero coefficient in $\theta^*(T)$, namely $T=1^{(a)}t^{(2)}u_1...u_{k-2}\otimes v_1...v_{b-k}$, and the coefficient is 1.
	
	\item[(b)]	Suppose $S=1^{(a-1)}tu_1...u_{k-1}\otimes 1v_1...t...v_{b-k-1}$. From (3.4) and (3.5), it follows that there is a unique $T \in B_0^t$ such that $S$ appears with nonzero coefficient in $\theta^*(T)$, namely $T=1^{(a)}tu_1...u_{k-1}\otimes v_1...t...v_{b-k-1}$, and the coefficient is 1.
	\end{enumerate}
	
From the cases (1) and (2), equation (3.2) follows for $ t>1. $ The proof of (3.3), $t>1$, is similar (and a bit shorter) and omitted. Finally, the proof of (3.2) and (3.3) for $ t=1 $ is similar (and a bit simpler) and omitted.

We now prove the statement of the Lemma.

\noindent$\it{Case \; 1.}$ Suppose $b-k+1$ is even. By substituting (3.2) and (3.3) we obtain
\[
\theta^{*}(g_{k})= \tbinom{a}{2} \overline{C^1_1} + (a-1)\sum_{i=2}^{b}(-1)^{i-1}\overline{C_0^i}
\]
and thus
\[ 
\theta^{*}(g_k) -(a-1)g_{k-1}= -
\tbinom{a}{2} \Big( (a+1) \overline{C^1_0} +\overline{C^1_1} + 2\sum_{i=2}^{b} (-1)^{i-1}\overline{C^i_0} \Big). 
\]
But $ \pi\big((a+1) \overline{C^1_0} +\overline{C^1_1} + 2\sum_{i=2}^{b} (-1)^{i-1}\overline{C^i_0}\big)=0$ in $E^1(\Delta(\mathrm{h}),D_{a+k-1} \otimes \wedge ^{b-k+1})$ because this is the relation coming from adding the first $\tbinom{b}{k-1}$ columns of the matrix $e^{(1)}(a,b,D_{a+k-1}\otimes\wedge^{b-k+1})$ according to Lemma 3.1 (1)-(3). Thus in this case the map $\theta^{*}$ is multiplication by $a-1$.

\noindent$\it{Case \; 2.}$ Suppose $b-k+1$ is odd. 
By substituting (3.2) and (3.3) we obtain
\[
\theta^{*}(g_k)= \tbinom{a+1}{2} \big( \overline{C^1_0} + \overline{C^1_1} \big) + a  \sum_{i=2}^{b} (-1)^{i-1}(\overline{C^i_0}+\overline{C^i_1})
\]
and using this we have 
\[
\theta^{*}(g_k) - ag_{k-1}= 
-\tbinom{a}{2} \Big( (a+1) \overline{C^1_0} +\overline{C^1_1} + 2\sum_{i=2}^{b} (-1)^{i-1}\overline{C^i_0} \Big).
\]
Thus in this case the map $\theta^{*}$ is multiplication by $a$. \end{proof}
\begin{theo}
	Let $\mathrm{h}=(a,1^b)$ and $\mathrm{h}(k)=(a+k,1^{b-k})$, where $ 2 \le k \le b$. If $n \ge b+1$, then
	$\mathrm{Ext}^1(\Delta(\mathrm{h}), \Delta(\mathrm{h}(k)) = 0$ unless $a+b+k$ is odd in which case $\mathrm{Ext}^1(\Delta(\mathrm{h}), \Delta(\mathrm{h}(k)) = \mathbb{Z}_2$.
\end{theo}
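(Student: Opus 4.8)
The plan is to realise $\Delta(\mathrm{h}(k))$ as the kernel term of the short exact sequence obtained by applying the construction of (2.1) to the hook $\mathrm{h}(k-1)$,
\[
0 \to \Delta(\mathrm{h}(k)) \xrightarrow{\,i\,} D_{a+k-1}\otimes\wedge^{b-k+1} \xrightarrow{\,\pi\,} \Delta(\mathrm{h}(k-1)) \to 0,
\]
and to feed it into $\mathrm{Ext}^{\bullet}(\Delta(\mathrm{h}),-)$. The first thing I would record is that $\mathrm{Hom}(\Delta(\mathrm{h}),\Delta(\mathrm{h}(k-1)))=0$ whenever $k\ge 2$: since $\mathrm{h}\ne\mathrm{h}(k-1)$ and both Weyl modules are $\mathbb{Z}$-free, this $\mathrm{Hom}$ injects into the corresponding $\mathrm{Hom}$ over $\mathbb{Q}$, which vanishes because rationally the Weyl modules are the pairwise non-isomorphic irreducibles. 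The long exact sequence then begins
\[
0 \to \mathrm{Ext}^1(\Delta(\mathrm{h}),\Delta(\mathrm{h}(k))) \xrightarrow{\,i^{*}\,} \mathrm{Ext}^1(\Delta(\mathrm{h}),D_{a+k-1}\otimes\wedge^{b-k+1}) \xrightarrow{\,\pi^{*}\,} \mathrm{Ext}^1(\Delta(\mathrm{h}),\Delta(\mathrm{h}(k-1))),
\]
and by Lemma 2.3 the middle group is $\mathbb{Z}_2$. Hence $\mathrm{Ext}^1(\Delta(\mathrm{h}),\Delta(\mathrm{h}(k)))=\ker\pi^{*}$ is either $0$ or $\mathbb{Z}_2$, and everything reduces to deciding whether $\pi^{*}\colon\mathbb{Z}_2\to\mathrm{Ext}^1(\Delta(\mathrm{h}),\Delta(\mathrm{h}(k-1)))$ is injective (giving $0$) or zero (giving $\mathbb{Z}_2$).

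For $k\ge 3$ I would avoid computing the target group and instead exploit the factorization of the Koszul differential $\theta_{a+k-2}=i\circ\pi$ coming from the same exact sequence, whence $\theta^{*}_{a+k-2}=i^{*}\circ\pi^{*}$. Here $i^{*}\colon\mathrm{Ext}^1(\Delta(\mathrm{h}),\Delta(\mathrm{h}(k-1)))\to\mathrm{Ext}^1(\Delta(\mathrm{h}),D_{a+k-2}\otimes\wedge^{b-k+2})$ is injective by the same $\mathrm{Hom}$-vanishing argument (now needing $k-2\ge 1$, i.e.\ $k\ge 3$), and both its source-block and target are $\mathbb{Z}_2$. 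Lemma 3.4, applied with $k-1$ in place of $k$, identifies $\theta^{*}_{a+k-2}$ with multiplication by $a+\epsilon_{b-k+2}-1$ on $\mathbb{Z}_2$; since $\epsilon_{b-k+2}\equiv b+k\pmod 2$, this multiplier is odd exactly when $a+b+k$ is even. Thus if $a+b+k$ is even then $\theta^{*}_{a+k-2}$ is an isomorphism, forcing $\pi^{*}$ injective and $\ker\pi^{*}=0$; while if $a+b+k$ is odd then $\theta^{*}_{a+k-2}=0$ and the injectivity of $i^{*}$ forces $\pi^{*}=0$, so $\ker\pi^{*}=\mathbb{Z}_2$. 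This is exactly the claimed dichotomy.

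The case $k=2$ is genuinely different, and this is where Lemma 3.3 enters: there the analogous $i^{*}$ would land in $\mathrm{Ext}^1(\Delta(\mathrm{h}),D_a\otimes\wedge^b)$ and fails to be injective, since the connecting map out of $\mathrm{Hom}(\Delta(\mathrm{h}),\Delta(\mathrm{h}))\ne 0$ obstructs it. Instead I would apply Lemma 3.3 directly: the map $\pi^{*}\colon\mathbb{Z}_2\to\mathrm{Ext}^1(\Delta(\mathrm{h}),\Delta(\mathrm{h}(1)))$ is multiplication by $N=\tfrac{(a+\epsilon_b-1)(a+b)}{2}$, and by \cite{Ma}, Theorem 6, the target is cyclic of order $a+b$. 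Hence $\pi^{*}=0$ iff $(a+b)\mid N$, i.e.\ iff $\tfrac{N}{a+b}=\tfrac{a+\epsilon_b-1}{2}$ is an integer, i.e.\ iff $a+\epsilon_b-1$ is even, i.e.\ iff $a+b$ is odd. Since $a+b\equiv a+b+2\pmod 2$, this yields $\mathrm{Ext}^1(\Delta(\mathrm{h}),\Delta(\mathrm{h}(2)))=\mathbb{Z}_2$ precisely when $a+b+2$ is odd, as stated.

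The main obstacle is the bookkeeping around the factorization $\theta_{a+k-2}=i\circ\pi$ together with the repeated use of $\mathrm{Hom}$-vanishing to upgrade the relevant $i^{*}$ to an injection: it is this injectivity that transports Lemma 3.4's parity computation for the $D\otimes\wedge$ targets onto the genuine Weyl-module target. The only real arithmetic lies in the $k=2$ divisibility $(a+b)\mid N$, which depends on knowing the exact order $a+b$ of $\mathrm{Ext}^1(\Delta(\mathrm{h}),\Delta(\mathrm{h}(1)))$ rather than merely its cyclicity.
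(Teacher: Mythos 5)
Your proposal is correct and takes essentially the same route as the paper's proof: the same long exact sequence coming from (2.1), with $k\ge 3$ handled by factoring $\theta$ through $\Delta(\mathrm{h}(k-1))$ so that injectivity of $i^*$ (from the Hom-vanishing, needing $k\ge 3$) transports Lemma 3.4's parity computation, and $k=2$ handled directly by Lemma 3.3 together with the fact that $\mathrm{Ext}^1(\Delta(\mathrm{h}),\Delta(\mathrm{h}(1)))\cong\mathbb{Z}_{a+b}$ from \cite{Ma}, Theorem 6. The only cosmetic difference is that you make the factorization $\theta^*=i^*\circ\pi^*$ and the resulting dichotomy (iso versus zero) explicit, which the paper leaves implicit in its phrase ``kernel of the composite map $\psi$''.
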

\begin{proof}
Applying $\mathrm{Hom}(\Delta(\mathrm{h}),-)$ to the short exact sequence sequence (2.1) for $\mathrm{h}(k-1)$ in place of $\mathrm{h}$ yields the exact sequence 
\begin{align*}
0 &\rightarrow \mathrm{Ext}^1(\Delta(\mathrm{h}),\Delta(\mathrm{h}(k)) \xrightarrow{i*} \mathrm{Ext}^1(\Delta(\mathrm{h}),D_{a+k-1} \otimes \wedge^{b-k+1}) \\ &\xrightarrow{\pi_0^*} \mathrm{Ext}^1(\Delta(\mathrm{h}),\Delta(\mathrm{h}(k-1)))
\end{align*}
because $\mathrm{Hom}(\Delta(\mathrm{h}),\Delta(\mathrm{h}(k-1)))=0$ as $ \mathbb{Q} \otimes \Delta(\mathrm{h})$ and $ \mathbb{Q} \otimes \Delta(\mathrm{h}(k-1))$ are distinct irreducible representations of $GL_n( \mathbb{Q} )$. 

First let $k=2$. From the above exact sequence, Lemma 2.3 and \cite{Ma}, Theorem 6, it follows that $\mathrm{Ext}^1(\Delta(\mathrm{h}),\Delta(\mathrm{h}(2)))$ is the kernel of the map $\mathbb{Z}_2 \rightarrow \mathbb{Z}_{a+b}$ which is multiplication by the integer  $ \frac{(a+ \epsilon_b-1)(a+b)}{2}$ according to Lemma 3.3. Thus the result follows.

Suppose $k \ge 3$. Then $\mathrm{Ext}^1(\Delta(\mathrm{h}),\Delta(\mathrm{h}(k-1)))$ injects in $\mathrm{Ext}^1(\Delta(\mathrm{h}),D_{a+k-2} \otimes \wedge^{b-k+2})$ and thus $\mathrm{Ext}^1(\Delta(\mathrm{h}),\Delta(\mathrm{h}(k))$ is the kernel of the composite map
\[
\psi : \mathrm{Ext}^1(\Delta(\mathrm{h}),D_{a+k-1} \otimes \wedge^{b-k+1}) \rightarrow \mathrm{Ext}^1(\Delta(\mathrm{h}),D_{a+k-2} \otimes \wedge^{b-k+2}).
\]
This map is induced by $D_{a+k-1} \otimes \wedge ^{b-k+1}  \xrightarrow{\theta} D_{a+k-2} \otimes \wedge ^{b-k+2}$. According to Lemma 3.4,  $\psi : \mathbb{Z}_2 \rightarrow \mathbb{Z}_2$  is by multiplication by $a+ \epsilon_{b-k}-1 $. Hence the result follows. \end{proof}

\section{$\mathrm{Ext}^k(\Delta(\mathrm{h}),\Delta(\mathrm{h}(k)))$}
Let $\mathrm{h}=(a,1^b)$ and $\mathrm{h}(k)=(a+k,1^{b-k})$, where we assume throughout this section that $1 \le k \le b$. We will prove the following result.
\begin{theo}If $n \ge b+1$, then $\mathrm{Ext}^k(\Delta(\mathrm{h}), \Delta(\mathrm{h}(k)))= \mathbb{Z}_{d_k}$, where $ d_k=gcd (\tbinom{a+b}{1},\tbinom{a+b}{2}, \dots ,\tbinom{a+b}{k} )$.
\end{theo}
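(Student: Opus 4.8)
The plan is to induct on $k$. The base case $k=1$ is \cite{Ma}, Theorem 6, which gives $\mathrm{Ext}^1(\Delta(\mathrm{h}),\Delta(\mathrm{h}(1)))=\mathbb{Z}_{a+b}=\mathbb{Z}_{d_1}$, since $d_1=\binom{a+b}{1}=a+b$. For the inductive step I would peel off one root using the short exact sequence (2.1) for the hook $\mathrm{h}(k-1)$,
$$0\to\Delta(\mathrm{h}(k))\xrightarrow{i}D_{a+k-1}\otimes\wedge^{b-k+1}\xrightarrow{\pi_0}\Delta(\mathrm{h}(k-1))\to 0,$$
and apply $\mathrm{Hom}(\Delta(\mathrm{h}),-)$.

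The key input is the vanishing $\mathrm{Ext}^k(\Delta(\mathrm{h}),D_{a+k-1}\otimes\wedge^{b-k+1})=0$: Lemma 2.3, read with $k-1$ in place of $k$, identifies this group with $\mathrm{Ext}^k(\wedge^{k},D_{k})$, and the latter vanishes because $\wedge^k=\Delta(1,1^{k-1})$ and $D_k=\Delta(k)$ are hooks differing by $k-1$ roots, so that all their extensions in degrees $>k-1$ are zero. Consequently the connecting homomorphism $\delta$ is onto and the relevant segment of the long exact sequence collapses to
$$\mathrm{Ext}^k(\Delta(\mathrm{h}),\Delta(\mathrm{h}(k)))\cong \mathrm{Ext}^{k-1}(\Delta(\mathrm{h}),\Delta(\mathrm{h}(k-1)))\big/\,\mathrm{im}(\pi_0^{*}),$$
where $\pi_0^{*}\colon\mathrm{Ext}^{k-1}(\Delta(\mathrm{h}),D_{a+k-1}\otimes\wedge^{b-k+1})\to\mathrm{Ext}^{k-1}(\Delta(\mathrm{h}),\Delta(\mathrm{h}(k-1)))$ is induced by $\pi_0$. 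By the inductive hypothesis the target is the cyclic group $\mathbb{Z}_{d_{k-1}}$ and by Lemma 2.3 the source is $\mathbb{Z}_{r_{k-1}}$; since a quotient of a cyclic group is cyclic, the whole problem reduces to computing $\mathrm{im}(\pi_0^{*})$.

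The heart of the matter, and the step I expect to be the main obstacle, is to show that $\pi_0^{*}$ sends a cyclic generator of $\mathrm{Ext}^{k-1}(\Delta(\mathrm{h}),D_{a+k-1}\otimes\wedge^{b-k+1})$ to an element generating the subgroup $\binom{a+b}{k}\mathbb{Z}_{d_{k-1}}$. This is the degree-$(k-1)$ analogue of Lemma 3.3. I would first write down such a generator explicitly from a presentation matrix of $\mathrm{Hom}(\theta_{k-1}(a,b),-)$ read off the resolution $P_*(a,b)$, in the spirit of the construction of $g_k$ in Lemma 3.2, and then apply $\pi_0$ termwise and straighten the resulting tableaux in $\Delta(\mathrm{h}(k-1))$ by the straightening law of subsection 2.2, expressing the image as a multiple of the generator of $\mathbb{Z}_{d_{k-1}}$. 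The combinatorial bookkeeping of the straightening in the longer tensor factors of $P_{k-1}(a,b)$ is what makes this markedly harder than the case $i=1$; however, the coefficient need only be determined modulo $d_{k-1}$, because in a cyclic group two elements generate the same subgroup exactly when they share the same gcd with the order.

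Finally, once $\mathrm{im}(\pi_0^{*})=\binom{a+b}{k}\mathbb{Z}_{d_{k-1}}$ is in hand, the quotient above is $\mathbb{Z}_{\gcd(d_{k-1},\binom{a+b}{k})}$, and the associativity of the gcd,
$$\gcd\!\Big(\gcd\big(\tbinom{r}{1},\dots,\tbinom{r}{k-1}\big),\tbinom{r}{k}\Big)=\gcd\big(\tbinom{r}{1},\dots,\tbinom{r}{k}\big),\quad r=a+b,$$
identifies it with $\mathbb{Z}_{d_k}$, completing the induction.
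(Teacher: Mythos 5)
Your skeleton coincides with the paper's own proof of Theorem 4.1, merely reindexed: the paper also inducts on the hook distance, applies $\mathrm{Hom}(\Delta(\mathrm{h}),-)$ to the same short exact sequence (written there for the step $k\to k+1$ rather than $k-1\to k$), kills the top $\mathrm{Ext}$ group of the middle term (your vanishing argument for $\mathrm{Ext}^k(\wedge^k,D_k)$ is correct; the paper argues via the length of the resolution $P_*(1,k)$), and thereby reduces everything to identifying $\mathrm{im}(\pi^*)$ inside the cyclic group $\mathbb{Z}_{d_{k-1}}$, finishing with the same gcd associativity. The difficulty is that you have deferred exactly the part that constitutes the bulk of the paper's Section 4. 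Showing that the image is $\binom{a+b}{k}\mathbb{Z}_{d_{k-1}}$ occupies Lemmas 4.3--4.6 there: an explicit cocycle $\Gamma_k$, a proof that $(k+1)\Gamma_k$ is a boundary while $\Gamma_k$ is not (so $\pi(\Gamma_k)$ generates $\mathbb{Z}_{r_k}$), a proof that $E^k(\Delta(\mathrm{h}),\Delta(\mathrm{h}(k)))$ is cyclic on $\pi(\delta_1)$, the straightening relations $\pi(\delta_{i,j})=\binom{a+k+i-2}{j}\pi(\delta_1)$, and the identity $\binom{a+k}{k+1}+\sum_{i=2}^{q}\binom{a+k+i-2}{k}=\binom{a+b}{k+1}$. ``Write down a generator and straighten'' is a plan for this, not a proof of it.

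More seriously, there is a case in which your plan cannot be carried out at all, and you do not flag it. Your claimed identity $\mathrm{im}(\pi_0^*)=\binom{a+b}{k}\mathbb{Z}_{d_{k-1}}$ conflates two very different situations. When $k$ is a prime power $p^e$, Lemma 2.3 gives $r_{k-1}=p$, the source $\mathbb{Z}_{r_{k-1}}$ has a nonzero generator, and pushing it forward is meaningful (this is the paper's Case 1). But when $k$ is divisible by two distinct primes, $r_{k-1}=1$: the source group is zero, hence $\mathrm{im}(\pi_0^*)=0$, and there is no class to push forward or straighten. In that case your identity is not a computation but the bare assertion that $\binom{a+b}{k}\mathbb{Z}_{d_{k-1}}=0$, i.e.\ that $d_{k-1}$ divides $\binom{a+b}{k}$, i.e.\ that $d_{k-1}=d_k$. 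This is a nontrivial number-theoretic fact, not a representation-theoretic one; the paper proves it as a separate case by invoking Theorem 1 of \cite{JOS}, which gives $d_k=\frac{a+b}{l_k}$ with $l_k$ the lcm of those $i\le k$ dividing $a+b$, and then checking $l_{k-1}=l_k$ when $k$ is not a prime power. Without this input (or an equivalent arithmetic argument), your induction breaks at every non-prime-power value of $k$.
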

	This result is known in the special cases $a=1$, $b=k$ \cite{Ak}, Section 4, and 
		any $a$, $b=k$ \cite{Ma}, eqn. (6) p. 2207. 
		
		According to the following Remark, the above $\mathrm{Ext}$ group is the highest possible nonzero $\mathrm{Ext}$ group between the hooks $\mathrm{h}$ and $\mathrm{h}(k)$. We thank both H. H Andersen and the referee for pointing out an error in a previous version of this paper concerning the proof of the Remark and for suggesting the proof that follows.
		
		\begin{remark}Let $n \ge b+1$. If $i>k$, then $\mathrm{Ext}^i(\Delta(\mathrm{h}), \Delta(\mathrm{h}(k)))=0.$\end{remark} 
		\begin{proof}
			We use induction on $k$. For $k=0$ the result follows from the general fact that no Weyl module has non trivial self extension, see [10], B.4. Remark. Applying $\mathrm{Hom}(\Delta (\mathrm{h}),-)$ to the short exact sequence
			\[0\rightarrow\Delta(\mathrm{h}(k+1))\rightarrow \Delta(a+k)\otimes  \Lambda(b-k) \rightarrow \Delta(\mathrm{h}(k))\rightarrow 0\] we obtain the exact sequence
			
			\begin{align*} \mathrm{Ext}^i(\Delta(\mathrm{h}), \Delta(\mathrm{h}(k)))&\rightarrow \mathrm{Ext}^{i+1}(\Delta(\mathrm{h}),\Delta(h(k+1))) \\&\rightarrow \mathrm{Ext}^{i+1}(\Delta(\mathrm{h}),\Delta(a+k)\otimes  \Lambda^{b-k}). \end{align*}
			The term on the left is zero by induction. By Lemma 2.3, the term on the right is 
			$$\mathrm{Ext}^{i+1}(\Delta(\mathrm{h}),\Delta(a+k)\otimes  \Lambda^{b-k})=\mathrm{Ext}^{i+1}(\Lambda^{k+1},\Delta(k+1))$$
			which is zero by induction since $i+1>k$. Hence the middle term is zero.
		\end{proof}

Recall the following notation from Section 2.3. $E^i(\Delta(\mathrm{h}),M)$ is the cokernel of the differential $\mathrm{Hom}(\theta_i(a,b),M)$ of the complex $\mathrm{Hom}(P_*(a,b),M)$, where $M$ is a skew Weyl module. The torsion part of this abelian group is isomorphic to $\mathrm{Ext}^i(\Delta(\mathrm{h}),M)$. Let $\pi$ the natural projection $\pi:\mathrm{Hom}(P_{i}(a,b),M)\to E^i(\Delta(\mathrm{h}),M)$
and $e^{(i)}(a,b,M)$ the matrix of the map $\mathrm{Hom}(\theta_i(a,b),M)$
with respect to orderings to be specified.

We order lexicographically the semi-standard basis of $\mathrm{Hom}(D(a_1,...,a_m),\Delta(\mathrm{h}(k))).$ Now if  $(a_1,...,a_m)$ is greater than   $(b_1,...,b_{m'})$ , where $m,m' \le n$, in the usual lexicographic ordering of sequences, we declare that each element of the semi-standard basis of $\mathrm{Hom}(D(a_1,...,a_m),\Delta(\mathrm{h}(k)))$ is less than each element of the semi-standard basis of $\mathrm{Hom}(D(b_1,...,b_{m'}),\Delta(\mathrm{h}(k)))$. With respect to the above orderings, Remark 2.1 yields the following, where the missing entries in the bottom left part of the matrix are equal to zero.
\begin{lm}
	For $i>1$ and $b>1,$ $
	e^{(i)}(a,b,\Delta(\mathrm{h}(k)))=
	\left(
	\begin{array}{c|c}
	A &*\\
	\hline \;
	& *
	\end{array}
	\right),$
	where $A=e^{(i-1)}(a+1,b-1,\Delta(\mathrm{h}(k)))$.
\end{lm}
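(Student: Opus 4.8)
The plan is to unwind the direct-sum decomposition of $P_*(a,b)$ recorded in the recursions of Subsection 2.3 and to track how it interacts with the $\mathrm{Hom}(-,\Delta(\mathrm{h}(k)))$ functor and the chosen orderings of semi-standard bases. The recursion $P_i(a,b)=P_{i-1}(a+1,b-1)\oplus D(a)\otimes P_i(1,b-1)$ for $i>0$ tells me that the domain and codomain of $\mathrm{Hom}(\theta_i(a,b),\Delta(\mathrm{h}(k)))$ each split into two pieces, and the description of the differential in Remark 2.1 is exactly a $2\times 2$ block diagram with a top horizontal arrow, a bottom horizontal arrow, and a single diagonal arrow. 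So the first step is to read off from Remark 2.1 that, with respect to the splitting, the matrix $e^{(i)}(a,b,\Delta(\mathrm{h}(k)))$ is block upper-triangular: the top-left block is $\mathrm{Hom}(\theta_{i-1}(a+1,b-1),\Delta(\mathrm{h}(k)))$, the top-right block comes from the diagonal map $\mathrm{Hom}(\triangle_{a,j}\otimes 1\otimes\cdots\otimes 1,\Delta(\mathrm{h}(k)))$, the bottom-right block is $-\mathrm{Hom}(1\otimes\theta_i(1,b-1),\Delta(\mathrm{h}(k)))$, and crucially the bottom-left block is zero.

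The next step is to match this block structure to the claimed shape $\left(\begin{smallmatrix} A & *\\ & *\end{smallmatrix}\right)$ and to identify the top-left block with $e^{(i-1)}(a+1,b-1,\Delta(\mathrm{h}(k)))$. By the very definition of the matrix $e^{(i-1)}(a+1,b-1,\Delta(\mathrm{h}(k)))$ it is the matrix of $\mathrm{Hom}(\theta_{i-1}(a+1,b-1),\Delta(\mathrm{h}(k)))$, so the identification $A=e^{(i-1)}(a+1,b-1,\Delta(\mathrm{h}(k)))$ will be immediate \emph{once I know the bases and their orderings agree}. The summand $P_{i-1}(a+1,b-1)$ of $P_i(a,b)$ maps under $\theta_i(a,b)$ into the summand $P_{i-1}(a+1,b-1)$ of $P_{i-1}(a,b)$ via $\theta_{i-1}(a+1,b-1)$, so both the rows and columns indexing the top-left block are precisely the semi-standard bases of the $\mathrm{Hom}$-groups attached to $P_{i-1}(a+1,b-1)$.

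The essential point — and the place I expect the real care to be needed — is verifying that the chosen global orderings restrict correctly and that the block sits in the top-left corner. Here I would argue as follows. A tensor factor $D(a_1,\dots,a_m)$ appearing in $P_{i-1}(a+1,b-1)$ has first exponent $a_1\ge a+1$ (by the constraint $a+1\le a_1$ defining $P_*(a+1,b-1)$), whereas a factor $D(a)\otimes P_i(1,b-1)$ coming from the second summand has first exponent exactly $a$. Since $a<a+1$, in the declared lexicographic ordering on sequences the factors with $a_1=a$ come \emph{before} the factors with $a_1\ge a+1$; hence all basis elements from the $D(a)\otimes P_*(1,b-1)$ summand precede those from the $P_*(a+1,b-1)$ summand, on both the domain and codomain sides. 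This means the $A$-block occupies the lower-right relative to that ordering, so to land it in the stated top-left corner I must check that the convention used to index the rows/columns of $e^{(i)}$ places the $P_{*}(a+1,b-1)$ block first; I would make the indexing convention explicit at this point so that the block upper-triangular form with the zero in the bottom-left matches the statement. The remaining blocks, being the diagonal and bottom-right maps of Remark 2.1, are then absorbed into the $*$'s, and the zero bottom-left block is exactly the absence of an arrow from $\mathrm{Hom}(P_{i-2}(a+1,b-1),\Delta(\mathrm{h}(k)))$ to $\mathrm{Hom}(D(a)\otimes P_i(1,b-1),\Delta(\mathrm{h}(k)))$ in that diagram. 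The main obstacle is thus bookkeeping: confirming that the ordering on semi-standard tableaux inherited from the global ordering on weight sequences restricts to the \emph{same} ordering used internally to define $e^{(i-1)}(a+1,b-1,\Delta(\mathrm{h}(k)))$, so that $A$ is literally that matrix and not merely conjugate to it by a permutation.
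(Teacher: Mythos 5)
Your overall route is the same as the paper's: the lemma is obtained there directly from Remark 2.1 together with the ordering conventions declared immediately before the statement, exactly as you propose --- the two-summand splitting of $P_{i-1}(a,b)$ and $P_i(a,b)$, the identification of one block with $\mathrm{Hom}(\theta_{i-1}(a+1,b-1),\Delta(\mathrm{h}(k)))$, the absorption of the diagonal and bottom horizontal arrows into the $*$ entries, and the zero block coming from the absence of an arrow from $\mathrm{Hom}(P_{i-2}(a+1,b-1),\Delta(\mathrm{h}(k)))$ to $\mathrm{Hom}(D(a)\otimes P_i(1,b-1),\Delta(\mathrm{h}(k)))$.

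There is, however, a concrete error in precisely the step you single out as the delicate one. You assert that in the declared ordering the summands with first entry $a_1=a$ (those of $D(a)\otimes P_*(1,b-1)$) come \emph{before} the summands with $a_1\ge a+1$ (those of $P_*(a+1,b-1)$), which would place $A$ in the bottom-right corner, and you then leave the top-left placement as a convention ``to be made explicit.'' This reads the paper's convention backwards. The paper declares that if $(a_1,\dots,a_m)$ is \emph{greater} than $(b_1,\dots,b_{m'})$ in the lexicographic ordering of sequences, then every semi-standard basis element of $\mathrm{Hom}(D(a_1,\dots,a_m),\Delta(\mathrm{h}(k)))$ is \emph{less} than every such element of $\mathrm{Hom}(D(b_1,\dots,b_{m'}),\Delta(\mathrm{h}(k)))$: lexicographically larger weight sequences index \emph{earlier} rows and columns. (This reversed convention is also what makes $\tr_q$, whose weight sequence $(a,1,\dots,1,k+1)$ is lexicographically smallest, the \emph{last} basis element in the proof of Lemma 4.3.) Since every summand of $P_{i-1}(a+1,b-1)$ has $a_1\ge a+1$ while every summand of $D(a)\otimes P_i(1,b-1)$ has $a_1=a$, the $P_*(a+1,b-1)$ blocks automatically come first on both the row and column sides, so $A$ lands in the top-left corner with the zero block beneath it; there is no residual indexing convention left to choose --- it is fixed by the paper, just in the direction opposite to the one you state. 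Your final observation, that the restriction of the global ordering to the summand $P_{i-1}(a+1,b-1)$ coincides with the ordering used internally to define $e^{(i-1)}(a+1,b-1,\Delta(\mathrm{h}(k)))$ (so that $A$ is literally that matrix rather than a permuted copy), is correct, and with the ordering read in the right direction your argument is complete.
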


\subsection{A generator of $\mathrm{Ext}^k(\Delta(\mathrm{h}),D_{a+k}\otimes\wedge^{b-k})$}
In this subsection we will identify a generator of $\mathrm{Ext}^k(\Delta(\mathrm{h}),D_{a+k} \otimes \wedge^{b-k})$. We assume throughout that $1 \le k \le b.$

	Let $\Gamma_k \in \mathrm{Hom}(P_k(a,b),D_{a+k}\otimes\wedge^{b-k})$,
	\begin{equation}\Gamma_k = \tbinom{a+k}{k+1} \tr_{1} - \tr_{2} + \cdots + {(-1)}^{b-k} \tr_{q},\end{equation} where $q=b-k+1$ and \begin{align*}\tr_{1} &= 1^{(a+k)} \otimes 2 \dots q \in \mathrm{Hom}(D(a+k,1,...,1),D_{a+k}\otimes \wedge^{b-k})\\
\tr_{i} &= 1^{(a-1)} i^{(k+1)} \otimes 1\dots\widehat{i} \dots q \\ 
&\,\,\,\,\,\,\in \mathrm{Hom}(D(a,1,\dots,k+1,\dots,1), D_{a+k} \otimes \wedge^{b-k}),\end{align*}
$i = 2, \dots,q,$ where $k+1$ is located at the i-th position. Consider the natural projection $\pi: \mathrm{Hom}(P_k(a,b), D_{a+k}\otimes\wedge^{b-k}) \to E^k(\Delta(\mathrm{h}),D_{a+k}\otimes\wedge^{b-k}).$

\begin{lm} If $k+1=p^e$, $p$ prime, then $\pi(\Gamma_k) \ne 0.$ 
\end{lm}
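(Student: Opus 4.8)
The assertion $\pi(\Gamma_k)\neq 0$ says exactly that $\Gamma_k$ is not in the image of $\mathrm{Hom}(\theta_k(a,b),M)$, where $M=D_{a+k}\otimes\wedge^{b-k}$; equivalently, $\Gamma_k$ is not a $\mathbb{Z}$-linear combination of the columns of $e^{(k)}(a,b,M)$. The plan is to exhibit one coordinate of $\mathrm{Hom}(P_k(a,b),M)$ on which, modulo $p$, every column of $e^{(k)}(a,b,M)$ vanishes while $\Gamma_k$ does not. I would take this coordinate to be the basis element $\tr_q$ itself, where $q=b-k+1$ (I assume $q\ge 2$, i.e.\ $k<b$, the boundary case $k=b$ being covered by known results \cite{Ma}). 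Its coefficient in $\Gamma_k$ is $(-1)^{b-k}=\pm 1$, a unit modulo $p$, so it suffices to prove that the $\tr_q$-entry of $\mathrm{Hom}(\theta_k(a,b),M)(U)$ lies in $p\mathbb{Z}$ for every semi-standard basis element $U$ of $\mathrm{Hom}(P_{k-1}(a,b),M)$.

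To compute these entries I would use Remark 2.2, which writes $\mathrm{Hom}(\theta_k(a,b),M)=\sum_t(-1)^{t-1}\phi_t$, together with the combinatorial rule for $\phi_t$: it lowers by $1$ each entry $>t$ of a semi-standard tableau, after which formerly distinct values may coalesce. On $D_{a+k}\otimes\wedge^{b-k}$ the outcome is a single basis element up to a sign, coming from reordering the exterior factor, and a binomial coefficient, coming from the identity $i^{(u)}i^{(v)}=\binom{u+v}{u}i^{(u+v)}$ in the divided-power factor. The heart of the matter is to locate all pairs $(U,t)$ for which $\phi_t(U)$ is a nonzero multiple of $\tr_q$. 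Since $\tr_q$ has weight $(a,1,\dots,1,k+1)$, the only slot that can be ``unmerged'' into two positive parts yielding a legitimate summand of $P_{k-1}(a,b)$ is the last one (the inner slots have weight $1$, and splitting the first slot would force a first part $<a$). Moreover the whole weight $k+1$ of that slot sits in the divided-power factor. This forces $t=q$ and $U=U_{u,v}:=1^{(a-1)}q^{(u)}(q+1)^{(v)}\otimes 1\,2\cdots(q-1)$ with $u+v=k+1$ and $u,v\ge 1$; for such $U$ one gets $\phi_q(U_{u,v})=\binom{k+1}{u}\tr_q$, while $\phi_t(U_{u,v})$ lands in a different weight space for $t\neq q$ and so has no $\tr_q$-component. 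Consequently the $\tr_q$-entry of the column indexed by $U_{u,v}$ equals $(-1)^{q-1}\binom{k+1}{u}$, and every other column has $\tr_q$-entry $0$.

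Now I would feed in the hypothesis $k+1=p^e$: the standard divisibility $\binom{p^e}{u}\equiv 0\pmod p$ holds for all $1\le u\le p^e-1=k$. Hence every column of $e^{(k)}(a,b,M)$ has its $\tr_q$-entry in $p\mathbb{Z}$. If $\pi(\Gamma_k)$ were $0$, then $\Gamma_k$ would be a $\mathbb{Z}$-combination of these columns, and comparing $\tr_q$-coordinates would give $\pm 1\in p\mathbb{Z}$, a contradiction. Therefore $\pi(\Gamma_k)\neq 0$.

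The step I expect to be the main obstacle is the middle one: carrying out the bookkeeping for $\phi_t$ on $D_{a+k}\otimes\wedge^{b-k}$ exactly (the sign from the exterior factor and the binomial weight from the divided-power factor) and, above all, verifying that the concentrated entry $q^{(k+1)}$ in the last slot of $\tr_q$ is the sole source of preimages, so that no column besides the $U_{u,v}$ contributes to the chosen coordinate. Once this localization is in hand, the classical congruence $p\mid\binom{p^e}{u}$ finishes the proof immediately.
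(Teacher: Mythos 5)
Your proof is correct and is essentially the paper's own argument: both isolate the $\tr_q$-coordinate (the last row of $e^{(k)}(a,b,D_{a+k}\otimes\wedge^{b-k})$), run the same weight analysis to show that the only columns with nonzero entry there are those indexed by $1^{(a-1)}q^{(u)}(q+1)^{(v)}\otimes 1\cdots(q-1)$, $u+v=k+1$, with entry $(-1)^{q-1}\tbinom{k+1}{u}$, and then conclude from the divisibility $p\mid\tbinom{p^e}{u}$, $1\le u\le k$ (the paper phrases this same fact as $\gcd\bigl(\tbinom{k+1}{1},\dots,\tbinom{k+1}{k}\bigr)$ being $p\neq 1$). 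Your explicit restriction to $q\ge 2$, i.e.\ $k<b$, is also implicitly present in the paper's proof (which reads off the coefficient of $\tr_q$ in $\Gamma_k$ as $(-1)^{q-1}$, valid only when $\tr_q\neq\tr_1$), and $k<b$ is the only case invoked later in the proof of Theorem 4.1, so the two arguments cover exactly the same ground.
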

\begin{proof}Suppose $\Gamma_k$ is equal to a linear combination of the columns of $A$, where $A=e^{(k)}(a,b,D_{a+k}\otimes \wedge^{b-k})$. Then the coefficient ${(-1)}^{q-1}$ of $\tr_{q}$, $q=b-k+1$, is a linear combination of the entries of the last row of $A$, as $\tr_{q}$ is the last basis element in $\mathrm{Hom}(P_k(a,b),D_{a+k}\otimes \wedge^{b-k})$ with respect to the lexicographic order. We claim that the set of nonzero elements in the last row of $A$ is $\{(-1)^{q-1}\tbinom{k+1}{1},\dots ,(-1)^{q-1}\tbinom{k+1}{k}\}.$

Indeed, let $i\in \{1,\dots ,q\}$, $\lambda = (\lambda_1,\dots ,\lambda_{q+1})$, where $\lambda_1 \in \{0, \dots ,k-1\}$, $\lambda_j \geq 1$ for $j \in\{2,\dots, q+1\}$, $\sum_{j=1}^{q+1}\lambda_j =b$, and consider a semi-standard basis element $T \in \mathrm{Hom}(D(a+\lambda_1, \lambda_2, \dots ,\lambda_{q+1}),D_{a+k}\otimes \wedge^{b-k})$, such that $$\phi_i(T)=c\tr_{q}, \; c\in \mathbb{Z}-\{0\}.$$ 
By Remark 2.2, the left hand side has weight $$\left(1^{a+\lambda_1},2^{\lambda_2},\dots,i^{\lambda_i + \lambda_{i+1}}, \dots ,q^{\lambda_{q+1}}\right),$$
 which must be equal to the weight $\left(1^a,2,\dots, q-1,  q^{k+1}\right)$ of the right hand side. So $\lambda_1 =0$ and if $1<i\leq q-1$ then $\lambda_i + \lambda_{i+1}=1$ which contradicts the hypothesis $\lambda_i \geq 1$ for $i \in \{2,\dots ,q+1\}$. This implies that $i=q$, $\lambda_1=0$, $\lambda_j=1$ for $j \in \{2,\dots, q-1\}$ and $\lambda_{q} + \lambda_{q+1}=k+1$. Hence $T=1^{(a-1)}{q}^{\lambda_{q}} {(q+1)}^{\lambda_{q+1}} \otimes1 \dots (q-1)$, where $\lambda_{q} + \lambda_{q+1} =k+1$. For such a $T$ we have $(-1)^{q-1}\phi_{q}(T)=(-1)^{q-1}\tbinom{k+1}{\lambda_q}\tr_{q},$
which proves the claim.

It follows that $gcd\left(\tbinom{k+1}{1},\dots,\tbinom{k+1}{k}\right)=1$ contradicting the assumption $k+1=p^e$, $p$ prime. Hence $\pi(\Gamma_k)\neq 0$. \end{proof}

Let $q=b-k+1$. Define $T_{1,j} \in \mathrm{Hom}(D(a+k-1, 1,\dots ,1), D_{a+k}\otimes \wedge^{b-k})$, $j=2,\dots,q+1$, 
$$T_{1,j}=1^{(a+k-1)}j \otimes 2\dots \widehat{j} \dots (q+1), $$
and $T_{i,j} \in \mathrm{Hom}(D(a,1,\dots,k,\dots,1), D_{a+k}\otimes \wedge^{b-k})$, where $k$ is at the i th-position, $i=2,\dots,q$, $j=i+1,\dots,q+1,$ $$T_{i,j}=1^{(a-1)}i^{(k)}j \otimes1\dots\widehat{i}\dots\widehat{j} \dots (q+1). $$
Let $$A=\tbinom{a+k-1}{k} \sum_{j=2}^{q+1}(-1)^jT_{1,j} +                                                                                                                                                                                                                                                                                                                                                                                                                                                                                                                                                                                                                                                                                                                                                                                                                                                                                                                                                                                                                                                                                                                   \sum_{i=2, j>i}^{q, q+1}(-1)^{j-i-1}T_{i,j}$$
and consider $\phi(A)$, where $\phi$ is the differential $\phi=\mathrm{Hom}(\theta_k(a,b),D_{a+k}\otimes\wedge^{b-k})$.

\begin{lm} We have $\phi(A)=(k+1)\Gamma_k$. Moreover, if $k+1=p^e$, $p$ prime, then $\pi(\Gamma_k)$ is a a generator of $\mathrm{Ext}^k(\Delta(\mathrm{h}), D_{a+k}\otimes\wedge^{b-k})$.

\end{lm}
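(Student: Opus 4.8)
The plan is to treat the two assertions in turn: the identity $\phi(A)=(k+1)\Gamma_k$ by a direct computation with the differential, and the generation statement as a short consequence of it.

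For the first assertion I would expand $\phi=\mathrm{Hom}(\theta_k(a,b),D_{a+k}\otimes\wedge^{b-k})$ as $\sum_{t\ge 1}(-1)^{t-1}\phi_t$ using Remark 2.2, apply each $\phi_t$ to every basis element $T_{1,j}$ and $T_{i,j}$ occurring in $A$ (replacing each entry $>t$ by its predecessor and then rewriting the result in the semistandard basis of $D_{a+k}\otimes\wedge^{b-k}$, i.e. reordering the wedge factors with the appropriate sign, multiplying the divided powers, and discarding any term with a repeated wedge entry), and finally collect the coefficient of each basis element of $\mathrm{Hom}(P_k(a,b),D_{a+k}\otimes\wedge^{b-k})$. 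The observation that organizes the whole computation is that a divided power $\ell^{(k+1)}$ (for $\ell\ge 2$), or $1^{(a+k)}$, can arise only by merging the two distinct divided-power entries of some $T_{i,j}$ into a single one, and this forces a ``diagonal'' shape. Concretely I expect $\phi_\ell(T_{\ell,\ell+1})=(k+1)\tr_\ell$ for $\ell=2,\dots,q$, the factor $k+1$ being the coefficient in $\ell^{(k)}\cdot\ell^{(1)}=(k+1)\ell^{(k+1)}$, and $\phi_1(T_{1,2})=(a+k)\tr_1$, the factor $a+k$ coming from $1^{(a+k-1)}\cdot1^{(1)}=(a+k)1^{(a+k)}$; in both of these the wedge part straightens with no sign. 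Combining the sign $(-1)^{t-1}$ from $\phi$ with the signs $(-1)^{j}$ and $(-1)^{j-i-1}$ built into $A$, and using the identity $(a+k)\tbinom{a+k-1}{k}=(k+1)\tbinom{a+k}{k+1}$, these diagonal contributions are exactly $(k+1)\Gamma_k$.

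It remains to check that every other term $\phi_t(T_{i,j})$ cancels. Each such term is a single semistandard basis element that is not proportional to any $\tr_\ell$, since, as noted above, the only way to reach a $\tr_\ell$ is through the diagonal merge already accounted for. I would verify the cancellation by fixing such a basis element $S$, listing all triples $(t,i,j)$ for which $\phi_t(T_{i,j})$ is a nonzero multiple of $S$, and showing that they occur in pairs with opposite signed coefficients; the alternating signs in $A$ together with the alternating signs of $\phi$ are precisely what force each pair to cancel. This bookkeeping is the main obstacle: it is routine in spirit but demands care with the wedge straightening signs and with which index $t$ performs the relevant merge. (The mechanism is already visible when $k=1$, $b=2$: there the two off-diagonal terms of each weight cancel and only $a(a+1)\tr_1-2\tr_2=2\Gamma_1$ survives.)

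For the ``Moreover'' part, once $\phi(A)=(k+1)\Gamma_k$ is known, the fact that $\pi$ is the projection onto the cokernel of $\phi$ gives $(k+1)\pi(\Gamma_k)=\pi(\phi(A))=0$ in $E^k(\Delta(\mathrm{h}),D_{a+k}\otimes\wedge^{b-k})$. Hence $\pi(\Gamma_k)$ is annihilated by the nonzero integer $k+1$, so it lies in the torsion subgroup of $E^k(\Delta(\mathrm{h}),D_{a+k}\otimes\wedge^{b-k})$; by the discussion of subsection 2.3 this torsion subgroup is $\mathrm{Ext}^k(\Delta(\mathrm{h}),D_{a+k}\otimes\wedge^{b-k})$, which equals $\mathbb{Z}_{r_k}$ by Lemma 2.3. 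When $k+1=p^e$ we have $r_k=p$, so this group is cyclic of prime order; since $\pi(\Gamma_k)\ne 0$ by the preceding Lemma, it is therefore a generator.
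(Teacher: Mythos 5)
Your strategy coincides with the paper's: extract the diagonal contributions $\phi_1(T_{1,2})=(a+k)\tr_1$ and $\phi_t(T_{t,t+1})=(k+1)\tr_t$, combine them via $(a+k)\tbinom{a+k-1}{k}=(k+1)\tbinom{a+k}{k+1}$ into $(k+1)\Gamma_k$, show that all other terms cancel in signed pairs, and then get the ``moreover'' statement from $(k+1)\pi(\Gamma_k)=\pi(\phi(A))=0$ together with Lemma 4.3 and Lemma 2.3, exactly as the paper does. Every claim you actually commit to is correct, including the weight/shape observation that a multiple of some $\tr_\ell$ can only arise from merging the two divided-power entries of a $T_{i,j}$, and your closing argument for the generator (nonzero torsion element of a group of prime order) is precisely the paper's.

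The one genuine shortfall is that the cancellation of the off-diagonal terms --- which is the bulk of the paper's proof of $\phi(A)=(k+1)\Gamma_k$ --- is only announced, not carried out: ``I would verify the cancellation by fixing such a basis element $S$, listing all triples $(t,i,j)$\dots'' is a plan, and the only instance you check is $k=1$, $b=2$. The plan would succeed, but the paper's way of organizing it is worth adopting because it makes the bookkeeping short: fix $t$ and compute all $\phi_t(T_{i,j})$ at once, rather than fixing a target element $S$. For $t>1$ one finds $\phi_t(T_{i,t})=\phi_t(T_{i,t+1})$ for $i<t$, with $\phi_t(T_{i,j})=0$ for $j\ne t,t+1$; $\phi_t(T_{t,j})=\phi_t(T_{t+1,j})$ for $j\ge t+2$; and $\phi_t(T_{i,j})=0$ for $i\ge t+2$. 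Since the coefficients in $A$ of the two members of each pair ($(-1)^{t-i-1}$ vs.\ $(-1)^{t-i}$, and $(-1)^{j-t-1}$ vs.\ $(-1)^{j-t-2}$) are opposite, everything except $(k+1)\tr_t$ cancels. The case $t=1$ works the same way with the pairs $(T_{1,j},T_{2,j})$, $j\ge 3$, using $\phi_1(T_{2,j})=\tbinom{a+k-1}{k}\phi_1(T_{1,j})$ and $\phi_1(T_{i,j})=0$ for $i\ge 3$. Note also that your worry about wedge-straightening signs is moot here: on the target $D_{a+k}\otimes\wedge^{b-k}$ the operator $\phi_t$ never creates a sign, since decrementing the entries of a strictly increasing wedge either keeps it strictly increasing or produces a repeated entry and hence zero.
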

\begin{proof} For the first statement, it suffices to show that
\begin{equation*}\phi_1(A)=\tbinom{a+k-1}{k}(a+k)\tr_{1} \;\; \hbox{and} \; \; \phi_t(A)=(k+1)\tr_{t}
	\end{equation*} 
 for $t=2,\dots,q$, since $\binom{a+k-1}{k}(a+k)=(k+1)\binom{a+k}{k+1}.$ Using Remark 2.2 an immediate calculation in each case shows the following.\\
 $ i=1 $: Then $\phi_1(T_{1,2})=(a+k)\tr_1$.\\
 $ i=2 $: Then $\phi_1(T_{2,j})=\binom{a+k-1}{k}\phi_1(T_{1,j})$ for all $j \ge 3.$\\
 $ i>2$: Then $\phi_1(T_{i,j})=0$ for all $j>i$.

Upon substituting, \begin{align*}\phi_1(A)=&\tbinom{a+k-1}{k}(a+k)\tr_1+\tbinom{a+k-1}{k}\sum_{j=3}^{q+1}(-1)^j\phi_1(T_{1,j}) +\sum_{j=3}^{q+1}(-1)^{j-3}\phi_1(T_{2,j})\\
=&\tbinom{a+k-1}{k}(a+k)\tr_1.	\end{align*}
Similarly for $t>1$, an immediate calculation in each case yields the following.\\$ i<t $: Then $\phi_t(T_{i,t})=\phi_t(T_{i,t+1})$ and $\phi_t(T_{i,j})=0$ if $j \neq t,t+1$.\\$ i=t $: Then $\phi_t(T_{t,t+1})=(k+1)\tr_t$ and $\phi_t(T_{t,j})=\phi_t(T_{t+1,j})$ if $j \ge t+2$.\\$ i\ge t+2 $: Then $\phi_t(T_{i,j})=0$ for all $j>i$.

Upon substituting, \begin{align*}\phi_t(A)=&\tbinom{a+k-1}{k}\left((-1)^t\phi_t(T_{i,t})+(-1)^{t+1}\phi_t(T_{i,t})\right)\\&+\sum_{i=2}^{t-1}\left((-1)^t\phi_t(T_{i,t})+(-1)^{t+1}\phi_t(T_{i,t})\right) \\&+(a+k)\tr_t+\sum_{j=t+2}^{q+1}\left((-1)^{j-t-1}\phi_t(T_{t,j})+(-1)^{j-t}\phi_t(T_{t,j})\right)\\
=& (a+k)\tr_t.\end{align*}

Let $k+1=p^e$, $p$ prime. By Lemma 4.3 and the first part of the present Lemma, $\pi(\Gamma_k$) is a nonzero torsion element of the abelian group $E^k(\Delta(\mathrm{h}), D_{a+k}\otimes\wedge^{b-k})$. Thus it is a nonzero element of $\mathrm{Ext}^{k}(\Delta(\mathrm{h}), D_{a+k}\otimes\wedge^{b-k})$ which according to Lemma 2.3 is $\mathbb{Z}_p$. Hence it is a generator of $\mathrm{Ext}^{k}(\Delta(\mathrm{h}), D_{a+k}\otimes\wedge^{b-k})$.\end{proof}

\subsection{Relations and a generator of $E^k(\Delta(\mathrm{h}),\Delta(\mathrm{h}(k)))$}
Let $q=b-k+1$ and define $\delta_1\in \mathrm{Hom}(D(a+k,1,\dots,1),\Delta(\mathrm{h}(k)))$ and $\delta_{i,j} \in \mathrm{Hom}(D(a+k-j,1,\dots,j+1,\dots ,1), \Delta(\mathrm{h}(k)))$, where $j+1$ is located at the i-th position, by
 \begin{align*}&\delta_1=1^{(a+k)}|2\dots q, \\ &\delta_{i,j} = 1^{(a+k-j)}i^{(j)}| 2  \dots q, \; i=2,\dots,q, \; j=0, \dots k,\end{align*}
 where it is understood that for $j=0$ we have $\delta_{i,0}=\delta_1.$
\begin{lm} In $E^k(\Delta(\mathrm{h}),\Delta(\mathrm{h}(k)))$ the following relations hold.
\[\pi(\delta_{i,j}) = \tbinom{a+k+i-2}{j} \pi(\delta_1), \; i=2,\dots q-1, j=0, \dots k. \]
\end{lm}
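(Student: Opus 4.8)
The plan is to prove the relation $\pi(\delta_{i,j}) = \binom{a+k+i-2}{j}\pi(\delta_1)$ inside the cokernel $E^k(\Delta(\mathrm{h}),\Delta(\mathrm{h}(k)))$ by exhibiting, for each admissible pair $(i,j)$, an explicit element of the image of the differential $\mathrm{Hom}(\theta_k(a,b),\Delta(\mathrm{h}(k)))$ whose class realizes the difference $\delta_{i,j}-\binom{a+k+i-2}{j}\delta_1$. Recall from Remark~2.2 that this differential is the alternating sum $\sum_t(-1)^{t-1}\phi_t$, where each $\phi_t$ merges the entries $t$ and $t+1$ of a semi-standard tableau. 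The strategy is to fix $i$ and induct on $j$, since the target binomial coefficients $\binom{a+k+i-2}{j}$ satisfy Pascal-type recursions that should mirror the combinatorics of applying a single $\phi_t$.

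First I would write down, for a suitable source tableau $U$ in $\mathrm{Hom}(P_{k+1}(a,b),\Delta(\mathrm{h}(k)))$ (one position longer than the target of $\delta_{i,j}$), the full expansion of $\sum_t(-1)^{t-1}\phi_t(U)$ using the straightening law of subsection~2.2. The natural candidate for $U$ is a tableau carrying $i^{(j)}$ together with an extra index that, upon merging, produces both $\delta_{i,j}$ and $\delta_{i,j-1}$ (or $\delta_1$) with binomial coefficients coming from the two-fold diagonalization $\triangle$ acting on the divided power $i^{(j)}$. The key computational input is that $\triangle$ applied to $i^{(j)}$ produces terms weighted by $\binom{j}{s}$, and that after straightening the $\wedge^{b-k}$ part via the rule in subsection~2.2, the surviving terms reassemble into the stated coefficient $\binom{a+k+i-2}{j}$. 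Concretely, the shift $a+k+i-2$ in the upper index should arise because the divided power on the symbol $1$ carries exponent $a+k-j$, and merging through positions $2,\dots,i$ accumulates the remaining $i-2$ units of weight, matching the Vandermonde-type identity $\sum_s\binom{a+k-j}{s}\binom{j}{\cdot}=\binom{a+k+i-2}{j}$.

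The base case $j=0$ is immediate since $\delta_{i,0}=\delta_1$ and $\binom{a+k+i-2}{0}=1$, so the relation is the identity $\pi(\delta_1)=\pi(\delta_1)$. For the inductive step I would apply a single $\phi_t$ to a carefully chosen element of $\mathrm{Hom}(P_{k+1}(a,b),\Delta(\mathrm{h}(k)))$, read off its image in $E^k$, and use that $\pi$ kills the image of the differential to conclude $\pi(\delta_{i,j})=\binom{a+k+i-2}{j}\pi(\delta_1)$ from the already-established $\pi(\delta_{i,j-1})$ relation and the Pascal recursion. The restriction $i \le q-1$ in the statement (rather than $i \le q$) ensures that position $i+1$ still exists within the block, so that the relevant $\phi_t$ does not vanish identically; I would track this boundary carefully.

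The main obstacle I anticipate is the bookkeeping of signs and of the straightening law on the exterior part $\wedge^{b-k}$. When a symbol $i$ appears both in the divided-power factor and (after merging) needs to be moved past the wedge factor $2\cdots q$, the straightening relations of subsection~2.2 introduce a cascade of terms with alternating signs, and the heart of the proof is verifying that all terms other than the desired multiples of $\delta_1$ either cancel in pairs or straighten to zero. Establishing that the binomial coefficients emerging from $\triangle(i^{(j)})$ combine correctly with these straightening signs to yield exactly $\binom{a+k+i-2}{j}$ — as opposed to some other polynomial in $a,k,i,j$ — is where the real work lies, and I would isolate this as a self-contained combinatorial identity (a Vandermonde convolution) to keep the argument transparent.
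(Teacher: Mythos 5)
Your overall strategy---producing relations in $E^k$ by applying the differential to explicit tableaux with one extra position, then finishing with a binomial identity---is the same in spirit as the paper's, but three concrete points in your plan would fail as written. First, your source tableaux live in the wrong group: $E^k(\Delta(\mathrm{h}),\Delta(\mathrm{h}(k)))$ is by definition the cokernel of $\mathrm{Hom}(\theta_k(a,b),-): \mathrm{Hom}(P_{k-1}(a,b),-)\to\mathrm{Hom}(P_k(a,b),-)$, and in this resolution it is $P_{k-1}(a,b)$, not $P_{k+1}(a,b)$, whose summands have one more tensor position ($P_i(a,b)$ consists of sequences of length $b+1-i$, so sequences get \emph{shorter} as the homological degree grows). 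Elements of $\mathrm{Hom}(P_{k+1}(a,b),\Delta(\mathrm{h}(k)))$ contribute no relations in $E^k$ at all. Second, what $\pi$ kills is the image of the full alternating sum $\sum_t(-1)^{t-1}\phi_t$, not of ``a single $\phi_t$''; your inductive step conflates the two.

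Third, and most seriously, the induction structure ``fix $i$, induct on $j$'' cannot close up. Pascal's rule relates $\tbinom{a+k+i-2}{j}$ to $\tbinom{a+k+i-3}{j}$ and $\tbinom{a+k+i-3}{j-1}$, i.e.\ it lowers the \emph{upper} index, which means changing $i$; there is no integer recursion in $j$ alone. This is not a bookkeeping accident: a weight comparison shows that a semi-standard tableau $S\in\mathrm{Hom}(P_{k-1}(a,b),\Delta(\mathrm{h}(k)))$ whose differential involves $\delta_{i,j}$ must merge either at position $1$ or at position $i$, and the remaining terms of $\sum_t(-1)^{t-1}\phi_t(S)$ are then $\delta$'s whose first index differs from $i$; in particular, for $i\ge 3$ and $j\ge 2$ no single tableau produces both $\delta_{i,j}$ and $\delta_1$, so the fixed-$i$ relations your induction needs are simply not available one tableau at a time. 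The paper instead inducts on $i$: the tableaux $S_{1,j}=1^{(a+k-j)}2^{(j)}|3\dots(q+1)$ give $\pi(\delta_{2,j})=\tbinom{a+k}{j}\pi(\delta_1)$ outright, while the tableaux $S_{i,j}=1^{(a+k-j)}(i+1)^{(j)}|2\dots\widehat{(i+1)}\dots(q+1)$ give the two-variable recursion $\pi(\delta_{i+1,j})=\pi(\delta_{i,j})+\pi(\delta_{i,j-1})$, from which the stated coefficient follows by Pascal. Your Vandermonde intuition is correct in substance---unwinding that recursion is exactly the convolution $\sum_s\tbinom{i-2}{s}\tbinom{a+k}{j-s}=\tbinom{a+k+i-2}{j}$---but it must be run along $i$, not along $j$.
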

\begin{proof} Let $S_{i,j} \in \mathrm{Hom}(P_{k-1}(a,b),\Delta(\mathrm{h}(k)))$, $i=1,\dots,q$, $$S_{i,j}=1^{(a+k-j)}(i+1)^{(j)}|2\dots \widehat{(i+1)} \dots (q+1)$$ and consider the differential in degree $k$ \[\sum_{t \ge 1}(-1)^{t-1}\phi_t: \mathrm{Hom}(P_{k-1}(a,b),\Delta(\mathrm{h}(k)) \rightarrow \mathrm{Hom}(P_{k}(a,b),\Delta(\mathrm{h}(k)))\] of the complex $\mathrm{Hom}(P_*(a,b),\Delta(\mathrm{h}(k)))$. An immediate calculation in each case using Remark 2.2 (and the straightening law for the first equality in the case $i>1$) yields the following.
	\begin{alignat*}{2}i=1: \;&\phi_1(S_{1,j})=\tbinom{a+k}{j} \delta_1, \; \phi_2(S_{1,j})=\delta_{2,j}, \; \phi_j(S_{1,j})=0, \; j>2.\\
                       i>1: \;&\phi_1(S_i) ={(-1)}^{i-1} \delta_{i,j-1}, \; \phi_t(S_{i,j}) = 0, \; t\in \{2,\dots,i-1\},\\
                            &\phi_i(S_{i,j})=\delta_{i,j},\; \phi_{i+1}(S_{i,j})=\delta_{i+1,j},\; \phi_t(S_{i,j})=0, \;t \ge i+2.\end{alignat*}
		 
In $E^{k}(a,b,\Delta(\mathrm{h}(k)))$, we have the relations \[\sum_{t=1}^{q}(-1)^{t-1} \pi(\phi_t(S_{i,j}))=0, \; i=1,\dots q.\]
Substituting the above for $i=1$ yields  
\begin{equation}
\pi(\delta_{2,j}) = \tbinom{a+k}{j} \pi(\delta_1), j=0, \dots k\end{equation}
and substituting the above for $i \ge 2$  yields 
\begin{equation}
\pi(\delta_{i+1,j})=\pi(\delta_{i,j})+\pi(\delta_{i,j-1}), \; i=2,\dots q-1, j=1, \dots k. 
\end{equation}
The equation of the Lemma follows by induction on $i$ using (4.2) and (4.3).
\end{proof}

\begin{lm} $E^k(\Delta(\mathrm{h}),\Delta(\mathrm{h}(k)))$ is a cyclic group generated by $\pi(\delta_1)$.
\end{lm}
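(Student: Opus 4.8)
The plan is to show that the cokernel $E^k(\Delta(\mathrm{h}),\Delta(\mathrm{h}(k)))$ is generated as an abelian group by the single element $\pi(\delta_1)$, by proving that every semi-standard basis element of the codomain $\mathrm{Hom}(P_k(a,b),\Delta(\mathrm{h}(k)))$ projects under $\pi$ to an integer multiple of $\pi(\delta_1)$. Since the $\pi$-images of the semi-standard basis elements certainly span $E^k$, it suffices to control them one summand at a time.

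First I would recall the recursive structure of the matrix $e^{(k)}(a,b,\Delta(\mathrm{h}(k)))$ furnished by Lemma 4.1, namely the block-triangular form with upper-left block $A=e^{(k-1)}(a+1,b-1,\Delta(\mathrm{h}(k)))$. This suggests an induction on $k$ (or equivalently on $b$, since $q=b-k+1$ decreases): the block-triangular shape means that relations already present in the smaller cokernel $E^{k-1}(\Delta(\mathrm{h}(1)),\Delta(\mathrm{h}(k)))$ persist, so by induction the basis elements living in that corner are already multiples of a single generator, and one only has to handle the finitely many extra basis elements introduced when passing from $(a+1,b-1)$ to $(a,b)$.

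Next I would exploit the relations produced in Lemma 4.5. That lemma already expresses a whole family of generators, the $\pi(\delta_{i,j})$ with $i=2,\dots,q-1$ and $j=0,\dots,k$, as the integer multiples $\tbinom{a+k+i-2}{j}\pi(\delta_1)$ of the distinguished element $\pi(\delta_1)$. The remaining task is therefore to account for the semi-standard basis elements of the various summands $\mathrm{Hom}(D(a_1,\dots,a_m),\Delta(\mathrm{h}(k)))$ that are \emph{not} of the form $\delta_{i,j}$. For these I would apply the straightening law of subsection 2.2 to the $\Delta(\mathrm{h}(k))$-entries: straightening rewrites any semi-standard tableau with a general lower row in terms of the $\delta_{i,j}$ (those whose $\wedge$-part is the string $2\dots q$), after which Lemma 4.5 finishes the reduction to $\pi(\delta_1)$.

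The main obstacle I expect is bookkeeping rather than conceptual: one must verify that \emph{every} semi-standard basis element, across all the summands $P_k(a,b)=\sum D(a_1,\dots,a_{q})$ indexed by compositions with $a\le a_1\le a+k$, does straighten into the span of the $\delta_{i,j}$ and hence into $\mathbb{Z}\cdot\pi(\delta_1)$, with no element escaping the reduction. Handling the boundary cases $i=q-1,q$ and the top part $j=0$ (where $\delta_{i,0}=\delta_1$) cleanly, and confirming that the straightening never produces a genuinely new weight outside those already covered by Lemma 4.5, is where care is needed; the block structure of Lemma 4.1 together with the induction hypothesis should guarantee that nothing outside the span of $\pi(\delta_1)$ survives.
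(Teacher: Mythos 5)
Your first step is sound and is in fact the paper's own argument: the block form of $e^{(k)}(a,b,\Delta(\mathrm{h}(k)))$ (with upper-left block $e^{(k-1)}(a+1,b-1,\Delta(\mathrm{h}(k)))$ and zero block below it) shows that every semi-standard basis element whose weight $\lambda$ has $\lambda_1>a$ is, modulo the image of the differential, handled by the induction hypothesis for the triple $(a+1,b-1,k-1)$, since $\mathrm{h}(k)=((a+1)+(k-1),1^{(b-1)-(k-1)})$. (You do need to anchor this induction at $k=1$, which the paper gets from the explicit matrix in the first paragraph of the proof of Lemma 3.3.)

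The second step, however, has a genuine gap. The elements not covered by the induction are those of weight $\lambda=(a,\lambda_2,\dots,\lambda_q)$, and for such a weight the semi-standardness already forces the column to be $2\,3\dots q$: the column consists of $b-k$ strictly increasing entries from $\{2,\dots,q\}$ and $q-1=b-k$. So the remaining basis elements are exactly the tableaux $T=1^{(a)}2^{(\lambda_2-1)}\dots q^{(\lambda_q-1)}\,|\,2\dots q$, which for general $(\lambda_2,\dots,\lambda_q)$ are \emph{not} of the form $\delta_{i,j}$, and the straightening law cannot convert them into $\delta_{i,j}$'s: straightening is an identity inside $\Delta(\mathrm{h}(k))$, it preserves content, and it rewrites \emph{non}-semi-standard elements in terms of the semi-standard basis -- it can never relate two distinct semi-standard basis elements, which are $\mathbb{Z}$-linearly independent and here even lie in different weight spaces. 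The relations you need are relations in the cokernel, i.e., they must come from the image of $\mathrm{Hom}(\theta_k(a,b),\Delta(\mathrm{h}(k)))$, and producing them is the real content of the proof. The paper does this by choosing, for each such $T$, the element $S=1^{(a)}2^{(\lambda_2-1)}\dots m^{(\lambda_m-1)}\,|\,2\dots \widehat{m}\dots (q+1)$ in $\mathrm{Hom}(P_{k-1}(a,b),\Delta(\mathrm{h}(k)))$, where $m$ is the largest index with $\lambda_m\ge 2$; the relation $\sum_{t}(-1)^{t-1}\pi\phi_t(S)=0$ contains $T$ with coefficient $\pm 1$ and otherwise only tableaux that are strictly smaller in the lexicographic order, so a second, descending induction on that order (with least element $\delta_1$) finishes the argument. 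Note also that Lemma 4.5 is not actually needed for this lemma -- it enters later, in the proof of Theorem 4.1, when computing $\pi_k^*(\Gamma_k)$.
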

\begin{proof} Induction on $k$, the case $k=1$ owing to the first paragraph of the proof of Lemma 3.3.
	
	Let $T \in \mathrm{Hom}(P_{k}(a,b),\Delta(\mathrm{h}(k)))$ be a semi-standard tableau of weight $\lambda=(\lambda_1,...,\lambda_q)$, $q=b-k+1$. If $\lambda_1 >a $ then by induction and Lemma 4.2 (since $\mathrm{h}(k)=(a+1+(k-1), 1^{b-1-(k-1)}$), $\pi(T)$ is a multiple of $\pi(\delta_1)$. 
	
	Suppose $\lambda_1=a$ in which case $T = 1^{(a)}2^{(\lambda_2 -1)}\dots q^{(\lambda_{q} -1)} | 2 \dots q.$ (In this notation it is understood that if $\lambda_j=1$, then the term $j^{(\lambda_j-1)}$ is omitted.) We will show that there are semi-standard tableaux $T_i \in \mathrm{Hom}(P_{k}(a,b),\Delta(\mathrm{h}(k)))$ and $a_i \in \mathbb{Z}$ such that $\pi(T) =\sum_{j}a_i\pi(T_j)$ and $T_j<T$ for every $j$ in the lexicographic ordering of semi-standard tableaux of $\mathrm{Hom}(P_{k}(a,b),\Delta(\mathrm{h}(k)))$. Since this set is finite and $\delta_1$ is the least element, we obtain by induction on the ordering that $\pi(T)=a\pi(\delta_1)$, $a \in \mathbb{Z}$.
	
	There exists an $i \ge 2$ such that $\lambda_i \ge 2$ because $k>0.$ Let  $m$ be the largest such $i$ and let
	$$S=1^{(a)}2^{(\lambda_2 -1)}\dots m^{(\lambda_{m} -1)} | 2 \dots \widehat{m} \dots (q+1),$$
	which is a semi-standard tableau in $\mathrm{Hom}(P_{k-1}(a,b),\Delta(\mathrm{h}(k))).$ Then for $t\in \{1,\dots,q\}$, straightforward calculations yield the following, where we assume that $m \ge 3$.\begin{enumerate}
		\item $\phi_1(S)= (-1)^{m-2}\tbinom{a+\lambda_2-1}{\lambda_2-1}1^{(a+\lambda_2)}2^{(\lambda_3-1)}\dots (m-1)^{(\lambda_m-2)}|2\dots q.$
		\item $\phi_t(S)=0$ if $m \ge 4$, $t=2,...,m-2$.
		\item $\phi_{m-1}(S)= (-1)^m \tbinom{\lambda_{m-1}+\lambda_m-2}{\lambda_m-2} 1^{(a+\lambda_2)}2^{(\lambda_3-1)}\dots (m-1)^{(\lambda_{m-1}+\lambda_m-2)}|2\dots q.$
		\item
		$\phi_m(S)= {(-1)}^{m-1} T$.
		\item $\phi_t(S)=0$, if $j \ge m+1$.\end{enumerate}
	The tableaux in the right-hand sides of equations (1) and (3) are semi-standard and less than $T$ in our ordering since $\lambda_2>0$, and the coefficient of $T$ in the right hand side of (4) is $\pm1$. Hence the desired result for $m \ge 3$ follows from $\sum_{t=1}^q(-1)^{t-1}\pi\phi_t(S)=0$.
	
	Let $ m=2 $. Then similarly, $0=\sum_{t=1}^{q}(-1)^{t-1}\pi\phi_t(S)=\tbinom{a+\lambda_2-1}{\lambda_2-1}\pi(\delta_1)-\pi(T)$ and the result follows.\end{proof}

\subsection{Proof of Theorem 4.1}
We prove Theorem 4.1 by induction on $k$, the case $ k=1 $ owing to \cite{Ma}, Theorem 6. Applying $\mathrm{Hom}(\Delta(\mathrm{h}),-)$ to the short exact sequence 
$$0 \rightarrow \Delta(\mathrm{h}(k+1)) \xrightarrow {i_k} D_{a+k} \otimes \wedge^{b-k} \xrightarrow {\pi_k} \Delta(\mathrm{h}(k)) \rightarrow 0 $$ yields the exact sequence
\begin{align}\cdots &\rightarrow \mathrm{Ext}^k(\Delta(\mathrm{h}),D_{a+k} \otimes \wedge^{b-k}) \xrightarrow {\pi^*_k} \mathrm{Ext}^k(\Delta(\mathrm{h}),\Delta(\mathrm{h}(k))) \rightarrow \nonumber \\
&\rightarrow  \mathrm{Ext}^{k+1}(\Delta(\mathrm{h}),\Delta(\mathrm{h}(k+1))) \rightarrow 0
\end{align}
because from Lemma 2.3, $ \mathrm{Ext}^{k+1}(\Delta(\mathrm{h}),D_{a+k} \otimes \wedge^{b-k})= \mathrm{Ext}^{k+1}(\wedge^{k+1}, D_{k+1})=0$ as the length of the projective resolution $P_*(1,k)$ of $\wedge^{k+1}$ is less than  $k+1$. By Lemma 2.3 we have $\mathrm{Ext}^k(\Delta(\mathrm{h}),D_{a+k} \otimes \wedge^{b-k})=  \mathbb{Z}_{r_k}.$

If  $\mathrm{Ext}^k(\Delta(\mathrm{h}),\Delta(\mathrm{h}(k)))=0$, then by induction $d_k=1$. Since $d_{k+1}|d_k$, we have $ d_{k+1}=1.$ Moreover, $\mathrm{Ext}^{k+1}(\Delta(\mathrm{h}),\Delta(h(k+1)))=0$ by (4.4) and hence the result holds in this case.

We may assume that $\mathrm{Ext}^k(\Delta(\mathrm{h}),\Delta(\mathrm{h}(k)))\neq0$. Since  $E^k(\Delta(\mathrm{h}),\Delta(\mathrm{h}(k)))$ is a cyclic $\mathbb{Z}$-module according to Lemma 4.6, and its torsion subgroup is nonzero, we have $E^k(\Delta(\mathrm{h}),\Delta(\mathrm{h}(k)))$ $=\mathrm{Ext}^k(\Delta(\mathrm{h}),\Delta(\mathrm{h}(k)))$.\\
\textit{Case 1}: Let $k+1=p^e$, $p$ prime. By Lemma 4.4, $\pi(\Gamma_k)$ is a generator of $\mathrm{Ext}^k(\Delta(\mathrm{h}),D_{a+k} \otimes \wedge^{b-k})$. We compute its image under the map $\pi_k^*$ of (4.4). 
	With the notation established at the beginning of subsections 4.1 and 4.2 and using the straightening law and Lemma 4.5 we have 
	\begin{align*}
	\pi^*_k(\pi(\tr_i)) &= \pi(1^{(a-1)} i^{(k+1)}|1 \dots \widehat{i} \dots q)= {(-1)}^{i+1}\pi(\delta_{i,k})\\&={(-1)}^{i+1}\tbinom{a+k+i-2}{k}\pi(\delta_{1}).
	\end{align*}
	By substituting in (4.1) and using the binomial coefficient identity $$\tbinom{a+k}{k+1} + \sum_{i=2}^{q}\tbinom{a+k+i-2}{k} = \tbinom{a+b}{k+1},$$ we obtain $\pi^*_k (\Gamma_k) =\tbinom{a+b}{k+1} \pi(\delta_{1}).$ Since, by Lemma 4.6, $\pi(\delta_1)$ is a generator of  $\mathrm{Ext}^{k}(\Delta(\mathrm{h}),\Delta(\mathrm{h}(k)))$,  we obtain from (4.4) and the induction hypothesis that  $\mathrm{Ext}^{k+1}(\Delta(\mathrm{h}),\Delta(h(k+1)))=\mathbb{Z}_{d_{k+1}}$.\\
	\textit{Case 2}: Suppose $k+1$ is divisible by two distinct primes, whence according to Lemma 2.3, $\mathrm{Ext}^k(\Delta(\mathrm{h}),D_{a+k} \otimes \wedge^{b-k})=0$. From (4.4) it suffices to show that $ d_k=d_{k+1}.$ 

By Theorem 1 of \cite{JOS}, $d_k=\frac{a+b}{l_k}$, where $l_k=lcm(1^{\eta_1},2^{\eta_2}, \dots ,k^{\eta_k})$ and $\eta_i=1$ if $i | a+b$, and $\eta_i=0$ otherwise.
If $k+1 \not| a+b$, then $\eta_{k+1} =0$ and hence $l_k=l_{k+1}$. If $k+1 | a+b$, then, since  $ k+1 $ is divisible by two distinct primes, every prime power factor of $k+1$ is less than $k+1$. Hence  $l_{k+1}=l_{k}$. \qed

Let $\mathbb{K}$ be an infinite field of characteristic $p>0$, $S_{\mathbb{K}}(n,r)$ the Schur algebra for $GL_n(\mathbb{K})$ and $\Delta_{\mathbb{K}}(\lambda)$ the Weyl module for $S_{\mathbb{K}}(n,r)$  corresponding to a partition $\lambda$ of $r$ with at most $ n $ parts. Then $S_{\mathbb{K}}(n,r)$ =$\mathbb{K}\otimes S(n,r)$ and $\Delta_{\mathbb{K}}(\lambda)$ =$\mathbb{K}\otimes \Delta(\lambda)$. From this and the universal coefficient theorem \cite{AB}, Theorem 5.3, our results yield the following.
\begin{cor}
Let $\mathbb{K}$ be an infinite field of characteristic $p>0$ and $n\ge b+1$.\begin{enumerate}
	\item Let $2 \le k \le b$. Then $\mathrm{Hom}_{S_{\mathbb{K}}(n,r)}(\Delta_{\mathbb{K}}(\mathrm{h}), \Delta_{\mathbb{K}}(\mathrm{h}(k)))=0$, unless $p=2$ and $ a+b+k $ is odd, in which case $\mathrm{Hom}_{S_{\mathbb{K}}(n,r)}(\Delta_{\mathbb{K}}(\mathrm{h})), \Delta_{\mathbb{K}}(\mathrm{h}(k)))=\mathbb{K}$.
	\item Let $1 \le k \le b$. Then $\mathrm{Ext}^{k}_{S_{\mathbb{K}}(n,r)}(\Delta_{\mathbb{K}}(\mathrm{h}), \Delta_{\mathbb{K}}(\mathrm{h}(k)))=0$, unless $p|\tbinom{a+b}{i}, i=1,...,k$, in which case $\mathrm{Ext}^{k}_{S_{\mathbb{K}}(n,r)}(\Delta_{\mathbb{K}}(\mathrm{h}), \Delta_{\mathbb{K}}(\mathrm{h}(k)))=\mathbb{K}$.
\end{enumerate}\end{cor}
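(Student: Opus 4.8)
The plan is to transport the integral computations of Theorem 3.5 and Theorem 4.1 to characteristic $p$ via the universal coefficient theorem \cite{AB}, Theorem 5.3. Since $\Delta(\mathrm{h})$ is $\mathbb{Z}$-free and carries the finite projective resolution $P_*(a,b)$ whose terms stay projective after base change, the complex computing the modular $\mathrm{Ext}$ groups is $\mathbb{K}\otimes_{\mathbb{Z}}\mathrm{Hom}(P_*(a,b),\Delta(\mathrm{h}(k)))$, a complex of free abelian groups tensored with $\mathbb{K}$. The universal coefficient theorem then yields, for each $i\ge 0$, a short exact sequence
\[0 \to \mathbb{K}\otimes_{\mathbb{Z}}\mathrm{Ext}^i(\Delta(\mathrm{h}),\Delta(\mathrm{h}(k))) \to \mathrm{Ext}^i_{S_{\mathbb{K}}(n,r)}(\Delta_{\mathbb{K}}(\mathrm{h}),\Delta_{\mathbb{K}}(\mathrm{h}(k))) \to \mathrm{Tor}_1^{\mathbb{Z}}\bigl(\mathbb{K},\mathrm{Ext}^{i+1}(\Delta(\mathrm{h}),\Delta(\mathrm{h}(k)))\bigr) \to 0,\]
in which the outer integral groups are exactly the ones determined earlier in the paper.

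The only coefficient bookkeeping required is the effect of $\mathbb{K}\otimes_{\mathbb{Z}}-$ and $\mathrm{Tor}_1^{\mathbb{Z}}(\mathbb{K},-)$ on a cyclic group. Applying these functors to $0\to\mathbb{Z}\xrightarrow{m}\mathbb{Z}\to\mathbb{Z}_m\to 0$ and using that multiplication by $m$ on $\mathbb{K}$ is invertible precisely when $p\nmid m$, I would record that both $\mathbb{K}\otimes_{\mathbb{Z}}\mathbb{Z}_m$ and $\mathrm{Tor}_1^{\mathbb{Z}}(\mathbb{K},\mathbb{Z}_m)$ equal $\mathbb{K}$ if $p\mid m$ and vanish otherwise.

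For part (1) I would set $i=0$. The integral group $\mathrm{Hom}(\Delta(\mathrm{h}),\Delta(\mathrm{h}(k)))$ is torsion-free and, by the same argument used in the proof of Theorem 3.5, vanishes because $\mathbb{Q}\otimes\Delta(\mathrm{h})$ and $\mathbb{Q}\otimes\Delta(\mathrm{h}(k))$ are distinct irreducibles of $GL_n(\mathbb{Q})$. Hence the left-hand term of the sequence is $0$, giving $\mathrm{Hom}_{S_{\mathbb{K}}(n,r)}(\Delta_{\mathbb{K}}(\mathrm{h}),\Delta_{\mathbb{K}}(\mathrm{h}(k)))\cong\mathrm{Tor}_1^{\mathbb{Z}}(\mathbb{K},\mathrm{Ext}^1(\Delta(\mathrm{h}),\Delta(\mathrm{h}(k))))$. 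By Theorem 3.5 the group $\mathrm{Ext}^1$ is $0$ unless $a+b+k$ is odd, when it equals $\mathbb{Z}_2$; the coefficient computation with $m=2$ then produces $\mathbb{K}$ exactly when $p=2$ and $a+b+k$ is odd, as claimed.

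For part (2) I would set $i=k$. Now the $\mathrm{Tor}$ term involves $\mathrm{Ext}^{k+1}(\Delta(\mathrm{h}),\Delta(\mathrm{h}(k)))$, which is $0$ by the Remark since $k+1>k$, so the sequence collapses to $\mathrm{Ext}^k_{S_{\mathbb{K}}(n,r)}(\Delta_{\mathbb{K}}(\mathrm{h}),\Delta_{\mathbb{K}}(\mathrm{h}(k)))\cong\mathbb{K}\otimes_{\mathbb{Z}}\mathrm{Ext}^k(\Delta(\mathrm{h}),\Delta(\mathrm{h}(k)))$. Theorem 4.1 identifies the latter with $\mathbb{Z}_{d_k}$, $d_k=\gcd\bigl(\tbinom{a+b}{1},\dots,\tbinom{a+b}{k}\bigr)$, so the coefficient computation with $m=d_k$ gives $\mathbb{K}$ when $p\mid d_k$ and $0$ otherwise; since $p\mid d_k$ is equivalent to $p\mid\tbinom{a+b}{i}$ for all $i=1,\dots,k$, this matches the stated condition. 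The computation is otherwise mechanical, and the one point I would treat with care is the indexing of the universal coefficient sequence: it is precisely because the obstruction at degree $i$ is governed by the integral $\mathrm{Ext}^{i+1}$ that the two vanishing inputs, namely rational irreducibility at $i=0$ and the Remark at $i=k$, isolate the single relevant integral group in each case.
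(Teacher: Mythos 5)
Your proposal is correct and takes essentially the same route as the paper: the paper's proof is exactly the base-change identifications $S_{\mathbb{K}}(n,r)=\mathbb{K}\otimes S(n,r)$, $\Delta_{\mathbb{K}}(\lambda)=\mathbb{K}\otimes\Delta(\lambda)$ combined with the universal coefficient theorem of \cite{AB}, Theorem 5.3, applied to Theorem 3.5 (for the Hom statement, via the $\mathrm{Tor}_1$ term on integral $\mathrm{Ext}^1$) and to Theorem 4.1 together with the Remark after it (for the $\mathrm{Ext}^k$ statement, via the tensor term). Your write-up merely makes explicit the coefficient bookkeeping and the degree indexing that the paper leaves implicit.
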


\section{Acknowledgments} We thank H. H. Andersen for various helpful discussions and for pointing out an error in the Remark after Theorem 4.1 and suggesting a proof.  We thank the reviewer for
detailed constructive comments and suggestions that helped improve the presentation and clarity of the
paper and for pointing out an error in the Remark after Theorem 4.1 and suggesting a proof.

\end{document}